\DeclareMathAlphabet{\mathpzc}{OT1}{pzc}{m}{it}
\newcounter{main}
\newtheorem{theorem}{Theorem}[section]
\newtheorem{proposition}[theorem]{Proposition}
\newtheorem{lemma}[theorem]{Lemma}
\newtheorem{corollary}[theorem]{Corollary}
\newtheorem{remark}{Remark}[section]
\newtheorem{maintheorem}{Theorem}
\newtheorem{maincorollary}{Corollary}
\newtheorem{claim}{Claim}[section]
\newcounter{example}
{{\stepcounter{example}}{\flushleft {\bf Example \arabic{example}:}}}%
{\par}
\DeclareMathOperator{\GL}{GL}
\DeclareMathOperator{\SL}{SL}
\def \dim{{\rm dim}}
\renewcommand{\varepsilon}{\epsilon}
\newcommand{\R}{\mathbb{R}}
\newcommand{\w}{\omega}
\title[Linear differential equations with $L^p$-variation on the parameters]{Simple Lyapunov spectrum for linear homogeneous differential equations with $L^p$ parameters}
\author[D. Amaro]{Dinis Amaro}
\address{Centro de Matem\'atica e Aplica\c{c}\~oes (CMA-UBI), Universidade da Beira
	Interior, Rua Marqu\^es d'\'Avila e Bolama, 6201-001, Covilh\~a, Portugal.}
\email{dinis.amaro@ubi.pt}
\author[M. Bessa]{M\'{a}rio Bessa}
\email{bessa@ubi.pt}
\author[H. Vilarinho]{Helder Vilarinho}
\email{helder@ubi.pt}
\date{\today}
\begin{document}
	
	\begin{abstract}
	 In the present paper we prove that densely, with respect to an $L^p$-like topology, the Lyapunov exponents associated to linear con\-ti\-nuous-time cocycles $\Phi:\mathbb{R}\times M\to \GL(2,\mathbb{R})$ induced by second order linear homogeneous differential equations
	 	  $\ddot x+\alpha(\varphi^t(\omega))\dot x+\beta(\varphi^t(\omega))x=0$ 
are almost everywhere distinct. The coefficients $\alpha,\beta$ evolve along the $\varphi^t$-orbit for $\omega\in M$ and $\varphi^t: M\to M$ is an ergodic flow defined on a probability space. We also obtain the corresponding version for the frictionless equation $\ddot x+\beta(\varphi^t(\omega))x=0$  and for a Schr\"odinger equation 
$\ddot x+(E-Q(\varphi^t(\omega)))x=0$, inducing a cocycle $\Phi:\mathbb{R}\times M\to \SL(2,\mathbb{R})$.
	\end{abstract}
	
	\maketitle
	
	\noindent
	
	\textbf{Keywords:} Linear cocycles; Linear differential systems; Multiplicative ergodic theorem; Lyapunov exponents; second order linear homogeneous differential equations.
	
	\noindent
	
	\textbf{\textup{2010} Mathematics Subject Classification:} Primary: 34D08,  37H15,
	Secondary: 34A30, 37A20.

	\section{Introduction}
	
	\subsection{Non-autonomous linear differential equations}
	The behaviour of the Lyapunov exponents which are determined by the asymptotic growth of the expression $\log\|\Phi^t_A\|^{1/t}$ where $\Phi^t_A$ is a matricial solution of the autonomous differential equation $\dot U(t)=A\cdot U(t)$ and $A$ is a square matrix of the same order as $U(t)$, is a simple exercise of linear algebra. Standard linear algebraic computations allows us to determine the Lyapunov spectrum which is defined by the Lyapunov exponents  and its eigendirections. The dynamics of a perturbed system like $\dot U(t)=B\cdot U(t)$, where $B$ is a perturbation of $A$, is a problem that is well understood (see e.g. \cite{Ka}). A much more complicated and interesting situation was considered in the pioneering works of Lyapunov and intended to consider the non-autonomous case $\dot U(t)=A(t)\cdot U(t)$, where $A$ is a matrix depending continuously on $t$.  Not only the asymptotic demeanor of $\log\|\Phi^t_A\|^{1/t}$ as well as its stability proves to be a substantially more difficult issue. A standard way of looking to non-autonomous linear differential equations is to consider the language of linear cocycles (see \S\ref{LC} for full details) where being non-autonomous  is captured by a labelling through an orbit of a given flow $\varphi^t$ on a certain phase space.
	
	\subsection{The quest for positive Lyapunov exponents}
	A positive (or negative) Lyapunov exponent gives us the average exponential rate of divergence (or convergence) of two neighboring trajectories whereas zero exponents give us the absence of any kind of exponential behavior. Pesin's theory guarantee a strong stable/unstable manifold theory in the presence of non-zero Lyapunov exponents. These geometric tools underlie much of the central results in today's dynamical systems. Consequently, there is no doubt that detecting non-zero Lyapunov exponents is an important question in dynamics an issue dating back to the late sixtiees and the work of Millionshchikov \cite{Mi}. It the early eightees Cornelis and Wojtkowski \cite{CW}, and Ledrappier \cite{Le} obtained criteria for the positivity of the Lyapunov exponents and in the nineties Knill \cite{Knill} and Nerurkar \cite{N} proved that non-zero Lyapunov exponents are a $C^0$-dense phenomena for certain cocycles. In the late nineties Arnold and Cong \cite{AC0} proved the $L^p$-denseness of positive Lyapunov exponents and their strategy was widespread in \cite{BVi} by two of the authors. Using Moser-type methods based on the concept of rotation number allowed Fabbri and Johnson to obtain abundance of positive Lyapunov exponents for linear differential systems evolving on $\text{SL}(2,\mathbb{R})$ and based on a translation on the torus (see \cite{F,FJ1,FJ2} and also the work with Zampogni \cite{FJZ}). Clearly, finding a po\-sitive Lyapunov exponent in $\text{SL}(2,\mathbb{R})$ immediately enable us to obtain a negative Lyapunov exponent and thus the simplicity of the Lyapunov spectrum (i.e. all Lyapunov exponents are different). Several results on the positivity of Lyapunov exponents established in the last ten years or so bring up different new approaches \cite{BGV,BVi,Vi,DK,BBCMVX,X}. As a paradigmatic example we recall \cite{Av} where Avila obtained abundance of simple spectrum, on a quite large scope of topologies and on the two dimensional case.	
		
\subsection{Asymptotic behaviour of second order linear homogeneous differential equations from Lyapunov's viewpoint}
	It has been known for almost two centuries that there are serious constraints when we try to apply analytic methods to integrate most functions. Indeed,  \emph{Liouville theory} (see e.g \cite{R}) explicitly des\-cribes what kind of problems can arise when solving differential equations. The qualitative theory of differential equations created by Poincar\'e and Lyapunov turn out to be a clever approach to deal with this setback. Here we intend to analyze the asymptotic behavior of the solutions of second order homogeneous linear differential equations of the form
	\begin{equation}\label{damp}
		\ddot{x}(t) +\alpha(\varphi^t(\omega))\dot{x}(t)+\beta(\varphi^t(\omega))x(t)=0,
	\end{equation}
	with coefficients $\alpha$ and $\beta$ displaying $L^p$ regularity, varying in time along the orbits of a flow $\varphi^t$ and allowing an $L^p$-small perturbation on the parameters. Namely, we will describe its Lyapunov spectrum taking into account the possibility of making a $L^p$-type perturbation on its coefficients. Instead of deal with a single equation we will consider infinite equations simultaneously as explained now: we consider a time-continuous cocycle based on an ergodic flow $\varphi^t\colon M \to M$ with respect to a probability measure in $M$ and with a dynamics on the fiber defined by a linear flow $\Phi^t_A$ which is solution of the linear variational equation $\dot{U}(\w,t)= A(\varphi^{t}(\omega))\cdot U(\w,t)$ with generator 
	\begin{equation}\label{damp2}
	\begin{array}{cccc}
A\colon &M & \longrightarrow & \mathbb{R}^{2\times2} \\& \w & \longmapsto &  \left(\begin{matrix}0&1\\ -\beta(\omega)& -\alpha(\omega)\end{matrix}\right)
\end{array}
	\end{equation}
	Differential equations like (\ref{damp}) appear in large scale in physics, engineering, complex biological systems and numerous applications of mathematics.  The quintessential example is the simple damped pendulum free from external forces where $\alpha$ and $\beta$ are functions depending on $\omega\in M$ evolving along a flow $\varphi^t\colon M\rightarrow M$ for $t\in\mathbb{R}$. When $\alpha$ and $\beta$ are first integrals (i.e. functions that are constant along the orbits of the flow $\varphi^t$) related with $\varphi^t$, then \eqref{damp} can be solved by simple algorithms of an elementary course on differential equations. When the parameters vary in time, explicit solutions could be hard to get. This is the case when the \emph{frictional force} $\alpha$ and the \emph{frequency of the oscillator} $\beta$ change over time which, we must admit, is the most plausible to happen in nature. Notice that genera\-tors like $A$ in \eqref{damp2} generate a particular class of solutions. Clearly, when $\alpha\not=0$ the solutions evolve on a subclass of the general linear group $\text{GL}(2,\mathbb{R})$ and when $\alpha=0$ the solutions evolve on a subclass of the special linear group $\text{SL}(2,\mathbb{R})$. Therefore, a specific study should be made taking into consideration that perturbations \emph{must} belong to our class and not to the wider class of generators of cocycles evolving in $\text{GL}(2,\mathbb{R})$ or even in $\text{SL}(2,\mathbb{R})$. Questions related to this particular class were treated in several works like e.g. \cite{ACE, AW, Be2, FL, Lei, ABV}.

	Fixing \emph{position} and \emph{momentum} $(x(0),\dot x(0))$ we intend to study the asymptotic behavior when $t\rightarrow\infty$ of the pair $(x(t),\dot x(t))$ namely asymptotic exponential growth rate given by the \emph{Lyapunov exponent}. In the present work and broadly speaking we intend to answer the following question:

\medskip

\begin{quote}
\emph{Is it possible to perturb the coefficients $\alpha$ and $\beta$, in an $L^p$-topology, in order to obtain two distinct Lyapunov exponents? }
\end{quote}

\medskip

Of course that, when considering the autonomous case in \eqref{damp2}, say $\alpha$ and $\beta$ not depending on $\w$ previous question is easily answered. Indeed, consider $A_\beta$ in \eqref{damp2} with $\alpha=0$, then $A_0$ has a solution with trivial Lyapunov spectrum (a single Lyapunov exponent equal to 0) but any $A_\beta$ with small $\beta\not=0$ will produce a solution with simple Lyapunov spectrum (two Lyapunov exponents equal to $\pm\sqrt\beta$). The difficulty increases significantly when we consider the non-autonomous case.

\medskip 

The precise concepts that allow an adequate formalisation to express the above question will be presented in Theorem~\ref{main.theorem} and Corollaries~\ref{main.corollary} and~\ref{main.corollary2}. 	
	
	\section{Definitions and statement of the results}
			
			\subsection{Linear cocycles}\label{LC} In this section we present some definitions that will be useful in the sequel. Let $(M,\mathcal{M},\mu)$ be a probability space and let $ \varphi\colon \R \times M \to M$ be a \emph{metric dynamical system} (or \emph{flow}) in the sense that  is a measurable map and
	\begin{enumerate}
		\item $\varphi^t \colon M \to M$ given by $\varphi^t (\w) = \varphi(t,\w)$ preserves the measure $\mu$ for all $t \in \R$;
		
		\item $\varphi^0 = \text{Id}_{M}$ and $\varphi^{t+s}=\varphi^t\circ\varphi^s$ for all $t,s \in\R$.
	\end{enumerate}
	 Unless stated otherwise we will consider along the text that the flow is ergodic in the usual sense that there exist no invariant sets except zero measure sets and
their complements. Let $\mathcal{B}(X)$ be the Borel $\sigma$-algebra of a topological space $X$. A (continuous-time) linear \emph{random dynamical system} (RDS) on $(\R^2,\mathcal B(\R^2))$, or a (continuous-time) \emph{linear cocycle}, over $\varphi$ is a $(\mathcal{B}(\R)\otimes \mathcal{M}/\mathcal{B}(\text{GL}(2, \mathbb{R}))$-measurable map 	
	\begin{equation*}
	\Phi:\R\times M \to \text{GL}(2, \mathbb{R})
	\end{equation*}
	such that the mappings $\Phi(t,\w)$ forms a cocycle over $\varphi$, i.e.,
		\begin{enumerate}
			\item $\Phi(0,\w)=\text{Id}$ for all $\w\in M$; 
			\item $\Phi(t+s,\w)=\Phi(t,{\varphi^{s}(\w)})\circ\Phi(s,\w)$, for all $s, t\in\R$ and $\w\in M$,
		\end{enumerate}
	and $t\mapsto \Phi(t,\w)$ is continuous for all $\w\in M$. We recall that having $\w\mapsto\Phi(t,\w)$ measurable for each $t\in\R$ and $t\mapsto \Phi(t,\w)$ continuous for all $\w\in M$ implies that $\Phi$ is measurable in the product measure space. These objects are also called \emph{linear differential systems} (LDS) in the literature.

	\subsection{Kinetic linear cocycles}\label{kinetic}
	
	We begin by considering as motivation the non-autonomous linear differential equation which describes a motion of the damped harmonic oscillator as the \emph{simple pendulum} along the path $(\varphi^{t}(\omega))_{t\in\mathbb{R}}$, with $\w\in M$ described by the flow $\varphi$.
		Let $K\subset \mathbb{R}^{2\times2}$ be the set of matrices $2\times 2$ of type
		\begin{equation}\label{KK}
		\left(\begin{matrix} 0&1\\b&a\end{matrix}\right)
	\end{equation}
with $a, b\in\R$. Denote by $\mathcal{G}$ the set of measurable applications $A: M\rightarrow \mathbb{R}^{2\times2}$  and by $\mathcal{K}\subset \mathcal{G}$ the set of \emph {kinetic} measurable applications $A: M\rightarrow K$. As usual we identify two applications on $\mathcal G$ that coincide on a $\mu$ full measure subset of $M$. Consider measurable maps $\alpha\colon M \to \R$ and $\beta\colon M \to \R$.
		Take the differential equation given in \eqref{damp}. Considering $y(t)= \dot{x}(t)$ we may rewrite \eqref{damp} as the following vectorial first order linear system
	\begin{eqnarray}\label{E1}
		\dot{X}= A(\varphi^{t}(\omega))\cdot X,
	\end{eqnarray}
	where $X=X(t)=(x(t),y(t))^T=(x(t),\dot x(t))^T$ and $A\in\mathcal{K}$ is given by \eqref{damp2}. 
		For all $1\leq p<\infty$ we define 
	\[\mathcal{G}^p=\left\{A\in\mathcal G\colon \int_M \|A\|^p d\mu<\infty\right\},
\]
where $\|\cdot\|$ denotes de standard Euclidean matrix norm. It is clear that for all $1\leq p<q<\infty$ we have $\mathcal G^q\subset \mathcal G^p$.
		 It follows from \cite[Thm. 2.2.2]{A} (see also Lemma 2.2.5 and Example 2.2.8 in this reference) that if $A\in \mathcal{G}^1$ then it generates a unique (up to indistinguishability)  linear RDS $\Phi_A$ satisfying
	\begin{equation}\label{eq:LDS}
		\Phi_{A}(t,\omega)=\text{Id}+\int_{0}^{t}A(\varphi^{s}(\omega))\cdot\Phi_{A}(s,\omega)\,ds.
	\end{equation}
		The solution $\Phi_A(t,\w)$ defined in \eqref{eq:LDS} is called \emph{the Carath\'eodory solution} or \emph{weak solution}. Given an initial condition $X(0)=v\in\mathbb{R}^2$, we say that $t\mapsto \Phi_A(t,\w)v$ solves or is a solution of \eqref{E1}, or that ~\eqref{E1} generates $\Phi_{A}(t,\w)$. Note  that $\Phi_A(0,\w)v=v$ for all $\w\in M$ and $v\in\mathbb{R}^{2}$. If the solution \eqref{eq:LDS} is differentiable in time (i.e. with respect to $t$) and satisfies for all $t$
	\begin{equation}\label{eq:LDS2}
		\frac{d}{dt}\Phi_A(t,\w)v=A(\varphi^t(\w))\cdot\Phi_A(t,\w)v\,\,\,\,\,\,\,\,\,\,\text{and}\,\,\,\,\,\,\,\,\,\, \Phi_A(0,\w) v=v,
	\end{equation}
	then it is called a \emph{classical solution} of ~\eqref{E1}. Of course that $t\mapsto \Phi_A(t,\w)v\,$ is continuous for all $\w$ and $v$. Due to \eqref{eq:LDS2} we call $A:M\to K$ a (kinetic) `infinitesimal generator' of $\Phi_A$. Sometimes, due to the relation between $A$ and $\Phi_A$, we refer to both $A$ and $\Phi_A$ as a kinetic linear cocyle/RDS/LDS. If \eqref{E1} has initial condition $X(0)=v$ then $\Phi_A(0,\w)v=v$ and $X(t)=\Phi_A(t,\w)v$.
			
Let $\mathcal{K}_0\subset \mathcal{K}$ stand for the \emph{traceless kinetic cocycles} derived from matrices as in~\eqref{KK} but with $a=0$. For $1\leq p< \infty$ set $\mathcal{K}^p=\mathcal{K}\cap \mathcal G^p$ and $\mathcal{K}^p_0=\mathcal{K}_0\cap \mathcal G^p\subset \mathcal{K}^p$.

		\bigskip

\subsection{The $L^p$ topology}
	We begin by defining an $L^p$-like topology generated by a metric that compares the infinitesimal generators on $\mathcal G$. Given $1\leq p < \infty$ and $A,B\in\mathcal{G}$ we 
	set 
			\begin{equation*}
		\hat\sigma_p(A,B):=\left\{\begin{array}{lll} \displaystyle\left(\int_{M} \|A(\omega)-B(\omega)\|^p\,d\mu(\w)\right)^{\frac1p}, \\    \infty\,\,\,\text{if the above integral does not exists,}         \\    \end{array}\right.
	\end{equation*}
 and define
	\begin{equation*}
		\sigma_p(A,B):=\left\{\begin{array}{lll}\frac{\hat\sigma_p(A,B)}{1+\hat\sigma_p(A,B)},\,&&\text{if}\,\, \hat\sigma_p(A,B)<\infty\\   1,\,&&\text{if}\,\, \hat\sigma_p(A,B)=\infty         \\    \end{array}\right..
	\end{equation*}
	Clearly, $\sigma_p$ is a distance in $\mathcal{G}$. It can be understood has  a version of the $L^p$-distance. Next topological content results were mainly proved in \cite{ABV}. The remaining statements follow straightforwardly.

\begin{proposition}\label{complete}
Consider $1\leq p <\infty$. Then:
\begin{enumerate}
\item[(i)] $\sigma_p(A,B)\leq \sigma_q(A,B)$ for all $1\leq p\leq q<\infty$ and all $A,B\in\mathcal{G}$.
\item[(ii)] If $A\in\mathcal{G}^1$ then $\sup_{0\leq t\leq 1}\log^{+}\|\Phi_A(t,\omega)^{\pm1}\|\in L^{1}(\mu)$.
\item[(iii)] If $A\in\mathcal{G}^p$ then for any $B\in\mathcal{G}$ satisfying $\sigma_p(A,B)<p$ we have $B\in\mathcal{G}^p$.
\item[(iv)] The sets $(\mathcal{K}^p,\sigma_p)$ and $(\mathcal{K}^p_0,\sigma_p)$ are closed, for all $1\leq p <\infty$.
\item[(v)] For all $1\leq p <\infty$, $(\mathcal{K}^p,\sigma_p)$ and $(\mathcal{K}^p_0,\sigma_p)$ are complete metric spaces and, therefore Baire spaces.
\end{enumerate}
\end{proposition}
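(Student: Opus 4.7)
My plan is to dispatch the five items using just three tools: Jensen's inequality on the probability measure $\mu$, the triangle inequality (Minkowski) in $L^p$, and Gronwall's lemma together with the $\varphi^s$-invariance of $\mu$ for the cocycle estimate in (ii). Items (i), (iii), and (iv) are essentially metric-level bookkeeping; (ii) and (v) are where the actual analysis happens.

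For (i), since $\mu$ is a probability measure, Jensen applied to the convex function $t\mapsto t^{q/p}$ yields $\hat\sigma_p(A,B)\leq \hat\sigma_q(A,B)$ whenever $p\leq q$; composing with the non-decreasing function $s\mapsto s/(1+s)$ on $[0,\infty]$ transfers the comparison to the bounded metrics, giving $\sigma_p\leq\sigma_q$. For (iii), the hypothesis forces $\hat\sigma_p(A,B)<\infty$, i.e. $A-B\in L^p$, and Minkowski gives $\|B\|_{L^p}\leq \|A\|_{L^p}+\|A-B\|_{L^p}<\infty$. For (iv), kinetic matrices are cut out of $\mathbb{R}^{2\times 2}$ by the linear constraints $a_{11}=0$, $a_{12}=1$ (and additionally $a_{22}=0$ in the traceless case); if $A_n\in\mathcal{K}^p$ converges to $A$ in $\sigma_p$, extract a subsequence converging to $A$ pointwise $\mu$-a.e., and these constraints pass to the limit, so $A\in\mathcal{K}^p$ (respectively $\mathcal{K}^p_0$).

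The principal analytic point is (ii). Starting from the Carath\'eodory identity~(\ref{eq:LDS}) and submultiplicativity, $\|\Phi_A(t,\omega)\|\leq 1+\int_0^t\|A(\varphi^s\omega)\|\,\|\Phi_A(s,\omega)\|\,ds$, so Gronwall gives $\log\|\Phi_A(t,\omega)\|\leq \int_0^t\|A(\varphi^s\omega)\|\,ds$. The right-hand side is non-decreasing in $t$, hence $\sup_{0\leq t\leq 1}\log^{+}\|\Phi_A(t,\omega)\|\leq \int_0^1\|A(\varphi^s\omega)\|\,ds$. Integrating over $\omega$, Fubini and the $\varphi^s$-invariance of $\mu$ turn the right-hand side into $\|A\|_{L^1(\mu)}$, which is finite by hypothesis. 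For $\Phi_A(t,\omega)^{-1}$, the derivative identity $\tfrac{d}{dt}\Phi_A^{-1}=-\Phi_A^{-1}A(\varphi^t\omega)$ yields the same Gronwall bound, so the inverse is handled identically. I expect this to be the main obstacle, since it is where one has to combine Gronwall carefully with Fubini and flow-invariance to convert a pathwise time-integral of $\|A\|$ into a genuine $L^1(\mu)$ norm.

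Finally, for (v), it suffices to establish completeness; the Baire property follows automatically. If $\{A_n\}$ is $\sigma_p$-Cauchy, then eventually $\sigma_p(A_n,A_m)\leq 1/2$, and inverting the definition via $\hat\sigma_p=\sigma_p/(1-\sigma_p)$ on $\{\sigma_p<1\}$ gives $\hat\sigma_p(A_n,A_m)\leq 2\sigma_p(A_n,A_m)$, so the tail is $\hat\sigma_p$-Cauchy. By completeness of $L^p(\mu,\mathbb{R}^{2\times 2})$ the sequence has an $\hat\sigma_p$-limit $A$, and the explicit relation between the two metrics transfers the convergence back to $\sigma_p$. Closedness of $\mathcal{K}^p$ and $\mathcal{K}^p_0$ established in (iv) places the limit in the relevant subset, yielding completeness, and the Baire category theorem then applies.
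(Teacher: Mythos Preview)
Your argument is sound. The paper itself does not prove this proposition: it simply states that the results ``were mainly proved in \cite{ABV}'' and that the remainder ``follow straightforwardly,'' so there is no in-paper proof to compare against. The route you take---Jensen for (i), Minkowski for (iii), $L^p$-convergence $\Rightarrow$ a.e.\ convergence along a subsequence for (iv), Gronwall plus Fubini and $\varphi^s$-invariance of $\mu$ for (ii), and Riesz--Fischer for (v)---is the standard one and is precisely what one would expect to find behind that citation.

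One caveat on (iii): as printed, the hypothesis $\sigma_p(A,B)<p$ is vacuous whenever $p>1$, since $\sigma_p\le 1$ by construction; taken literally the statement would then assert $\mathcal{G}^p=\mathcal{G}$, which is false. You have tacitly read the hypothesis as $\sigma_p(A,B)<1$ (equivalently $\hat\sigma_p(A,B)<\infty$), which is surely what is intended and is exactly what your Minkowski step needs. It is worth flagging this explicitly rather than letting ``the hypothesis forces $\hat\sigma_p(A,B)<\infty$'' pass without comment.
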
	

Next results are elementary in measure theory nevertheless we will use it often. They capture the whole idea of making huge perturbations on the uniform norm but small perturbations in the $\sigma_p$-distance as long the support is small in measure.

\begin{lemma}\label{simples}
Let $1\leq p <\infty$. Given $A\in \mathcal{G}^p$ and $\epsilon>0$ there exists $\delta>0$ such that if $\mathcal{F}\in \mathcal{M}$ and $\mu(\mathcal{F})<\delta$, then $\int_\mathcal{F}\|A(\w)\|^p\,d\mu(\w)<\epsilon$.
\end{lemma}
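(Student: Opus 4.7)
The statement is the standard absolute continuity of the Lebesgue integral for the integrable function $f(\omega):=\|A(\omega)\|^p$, which lies in $L^1(\mu)$ by the hypothesis $A\in\mathcal{G}^p$. My plan is to reduce the problem to a truncation argument so that the integral of $f$ over a small-measure set can be split into a uniformly small tail piece plus a bounded piece controlled by $\mu(\mathcal{F})$.

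First, I would set $f_n(\omega):=\min\{f(\omega),n\}$. Each $f_n$ is measurable, bounded by $n$, and satisfies $0\le f_n\le f$ with $f_n\nearrow f$ pointwise as $n\to\infty$. Since $f\in L^1(\mu)$ and $f-f_n\ge 0$ with $f-f_n\to 0$ pointwise and $f-f_n\le f$, the dominated convergence theorem (or equivalently monotone convergence applied to $f_n$) yields
\[
\lim_{n\to\infty}\int_M (f-f_n)\,d\mu = 0.
\]
Hence, given $\epsilon>0$, one can fix $N\in\mathbb{N}$ such that $\int_M(f-f_N)\,d\mu<\epsilon/2$.

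Next, I would choose $\delta:=\epsilon/(2N)$ and take any $\mathcal{F}\in\mathcal{M}$ with $\mu(\mathcal{F})<\delta$. Splitting gives
\[
\int_\mathcal{F}\|A(\omega)\|^p\,d\mu(\omega) = \int_\mathcal{F}(f-f_N)\,d\mu + \int_\mathcal{F} f_N\,d\mu \le \int_M (f-f_N)\,d\mu + N\cdot\mu(\mathcal{F}) < \frac{\epsilon}{2}+\frac{\epsilon}{2}=\epsilon,
\]
which is the required estimate. No genuine obstacle is anticipated here; the only subtlety is remembering that $A\in\mathcal{G}^p$ is exactly the assumption that ensures $f=\|A\|^p\in L^1(\mu)$, which is what legitimises the truncation step. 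The whole argument is a direct invocation of the absolute continuity of the integral for $L^1$ functions applied to $\|A\|^p$.
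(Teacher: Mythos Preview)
Your proof is correct. It is the standard direct truncation argument for absolute continuity of the integral: cut $f=\|A\|^p$ at level $N$, control the tail $\int(f-f_N)$ by integrability, and bound the truncated part by $N\mu(\mathcal{F})$.

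The paper proceeds differently. It argues by contradiction: assuming there is $\epsilon>0$ and sets $\mathcal{F}_n$ with $\mu(\mathcal{F}_n)<2^{-n}$ but $\int_{\mathcal{F}_n}\|A\|^p\,d\mu\ge\epsilon$, Borel--Cantelli gives $\mu(\limsup_n\mathcal{F}_n)=0$, and then the reverse Fatou lemma (with dominating function $\|A\|^p\in L^1$) forces $\limsup_n\int_{\mathcal{F}_n}\|A\|^p\,d\mu\le\int_{\limsup_n\mathcal{F}_n}\|A\|^p\,d\mu=0$, a contradiction. Your route is more elementary and constructive, yielding an explicit $\delta=\epsilon/(2N)$ once $N$ is fixed; the paper's route is non-constructive but avoids the truncation bookkeeping by packaging everything into Borel--Cantelli plus reverse Fatou. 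Both ultimately rest on the single hypothesis $\|A\|^p\in L^1(\mu)$.
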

\begin{proof}
	The proof is made by contradiction. Suppose that exists $\epsilon>0$ and $\mathcal{F}_n\in \mathcal{M}$, for each $n\in\mathbb{N}$, such that $\mu(\mathcal{F}_n)<\frac{1}{2^n}$ and 
	\begin{equation}\label{contra}
	\int_{\mathcal{F}_n}\|A(\w)\|^{p}\,d\mu(\w)\geq\epsilon.
	\end{equation}
	Letting $\mathcal{F}=\limsup_n\mathcal{F}_n$, by the Borel-Cantelli lemma $\mu(\mathcal{F})=0$, and so 
	\begin{equation}\label{contra2}
	\int_{\mathcal{F}}\|A(\w)\|^p\,d\mu(\w)=0.
	\end{equation}
	The following leads to a contradiction:
	\begin{eqnarray*}
	\epsilon&\overset{\eqref{contra}}{\leq}&\limsup\int_{\mathcal{F}_n} \|A(\w)\|^p\,d\mu(\w)=\limsup\int \|A(\w)\|^p\chi_{\mathcal{F}_n}(\w)\,d\mu(\w)\\
	&\overset{\star}{\leq}&\int \limsup\|A(\w)\|^p\chi_{\mathcal{F}_n}(\w)\,d\mu(\w)=\int \|A(\w)\|^p\chi_{\mathcal{F}}(\w)\,d\mu(\w)\\
	&=& \int_{\mathcal{F}} \|A(\w)\|^p\,d\mu(\w)\overset{\eqref{contra2}}{=}0,
	\end{eqnarray*}
	where in $\star$ we used the reverse Fatou lemma.
\end{proof}
		
\begin{corollary}\label{coro:lp}
Let $1\leq p <\infty$,  $A\in \mathcal{G}^p$ and $\epsilon>0$ be given. Consider $B\in \mathcal{G}^p$ such that $A(\w)\neq B(\w)$ if and only if $\w\in\mathcal F$ for some $\mathcal F\in\mathcal M$ (that is, $B$ only differs from $A$ in $\mathcal F$).  Then there exists $\delta>0$ such that if $\mu(\mathcal{F})<\delta$ we have $\sigma_p(A,B)<\epsilon$.
\end{corollary}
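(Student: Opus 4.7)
The plan is to first control the unnormalized distance $\hat\sigma_p(A,B)$ and then read off a bound for $\sigma_p(A,B)$ from the defining formula $\sigma_p=\hat\sigma_p/(1+\hat\sigma_p)$. Since $A(\omega)=B(\omega)$ off the set $\mathcal{F}$, the integral in the definition of $\hat\sigma_p$ collapses onto $\mathcal{F}$:
\[
\hat\sigma_p(A,B)^p=\int_{\mathcal{F}}\|A(\omega)-B(\omega)\|^p\,d\mu(\omega).
\]
Applying the elementary convexity bound $\|A-B\|^p\le 2^p(\|A\|^p+\|B\|^p)$ then gives
\[
\hat\sigma_p(A,B)^p\le 2^p\!\left(\int_{\mathcal{F}}\|A\|^p\,d\mu+\int_{\mathcal{F}}\|B\|^p\,d\mu\right).
\]

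Since both $A$ and $B$ belong to $\mathcal{G}^p$, I would invoke Lemma~\ref{simples} once for $A$ and once for $B$: given any target $\eta>0$, there exist $\delta_A,\delta_B>0$ such that $\mu(\mathcal{F})<\delta_A$ forces $\int_{\mathcal{F}}\|A\|^p\,d\mu<\eta$, and analogously for $B$ with threshold $\delta_B$. Setting $\delta=\min\{\delta_A,\delta_B\}$ then yields $\hat\sigma_p(A,B)^p<2^{p+1}\eta$ whenever $\mu(\mathcal{F})<\delta$.

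To finish, observe that the map $x\mapsto x/(1+x)$ is increasing on $[0,\infty)$, so making $\hat\sigma_p$ small forces $\sigma_p$ to be small. Concretely, for $\epsilon\in(0,1)$ one needs $\hat\sigma_p(A,B)<\epsilon/(1-\epsilon)$, so choosing $\eta$ small enough that $2^{p+1}\eta<\bigl(\epsilon/(1-\epsilon)\bigr)^p$ completes the argument; the case $\epsilon\ge 1$ is vacuous since $\sigma_p\le 1$ by construction. There is no serious obstacle: the corollary is essentially a bookkeeping consequence of Lemma~\ref{simples} applied to the two functions $A$ and $B$, the only minor point being the need to bound the contribution of $B$ (not just $A$) on $\mathcal{F}$, which is precisely what the hypothesis $B\in\mathcal{G}^p$ affords.
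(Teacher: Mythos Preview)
Your proof is correct and follows essentially the same approach as the paper: both reduce to the absolute continuity of the $L^p$-integral encoded in Lemma~\ref{simples}. The paper's version is marginally more direct in that it applies Lemma~\ref{simples} once to $A-B\in\mathcal{G}^p$ (with target $\epsilon^p$) rather than separately to $A$ and to $B$, and it uses the trivial inequality $\sigma_p\leq\hat\sigma_p$ instead of inverting $x\mapsto x/(1+x)$; but these are cosmetic simplifications, not a different strategy.
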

\begin{proof}
Is is enough to prove that $\hat\sigma_p(A,B)<\epsilon$. For that, apply Lemma~\ref{simples} for $(A-B)\in \mathcal G^p$ and $\epsilon^p$.
\end{proof}

	\subsection{Statement of Theorem~\ref{main.theorem} and a tour on its proof }
Let $1\leq p <\infty$ and $A\in\mathcal{K}^p$. Since $\mathcal{K}^p\subset\mathcal{K}^1\subset \mathcal{G}^1$, from Proposition~\ref{complete} the cocycle $\Phi_A$ satisfies the following \emph{integrability condition}
	\begin{equation}\label{eq IC}
	\sup\limits_{0\leq t\leq 1}\log^{+}\|\Phi_A(t,\omega)^{\pm1}\|\in L^{1}(\mu).
	\end{equation}	
	Hence, under condition \eqref{eq IC} Oseledets theorem (see e.g. \cite{O,A}) guarantees that for $\mu$ almost every $\omega\in M$, there exists a $\Phi_A$-invariant splitting, called \emph{Oseledets splitting}, of the fiber $\mathbb{R}^{2}_{\omega}=E^{1}_{\omega}\oplus E^{2}_{\omega}$ and real numbers $\lambda_{1}(A,\omega)\geq\lambda_{2}(A,\omega)$, called \emph{Lyapunov exponents}, such that:
	\begin{equation*}
		\lambda(A,\omega,v_i):=\underset{t\rightarrow{\pm{\infty}}}{\lim}\frac{1}{t}\log{\|\Phi_A(t,\omega) v_{i}\|=\lambda_{i}(A,\omega)},
	\end{equation*}
	for any $v_{i}\in{E^{i}_{\w}\setminus\{\vec{0}\}}$ and $i=1, 2$.  
		If the flow $\varphi^{t}$ is ergodic, then the Lyapunov exponents (and the dimensions of the associated subbundles) are constant $\mu$ almost everywhere, and we refer to them as $\lambda_1(A)$ and  $\lambda_2(A)$, with $\lambda_1(A)\geq\lambda_2(A)$. We say that $A$ (or $\Phi_A$) has \emph{one-point Lyapunov spectrum} or \emph{trivial Lyapunov spectrum} if for $\mu$ a.e. $\w\in M$, $\lambda_{1}(A,\omega)= \lambda_{2}(A,\omega)$. Otherwise we say  $A$ (or $\Phi_A$) has \emph{simple Lyapunov spectrum}. For details on these results see \cite{A} (in particular, Example 3.4.15). 
				
	\medskip
	
	We are now in conditions to state our main result that establishes the existence of a $\sigma_p$-dense subset of $\mathcal{K}^p$ displaying simple spectrum:
	
	\medskip
	
	\begin{maintheorem}\label{main.theorem}
		Let $\varphi^t:M\to M$ be ergodic. For any $1\leq p<\infty$, $A\in\mathcal{K}^p$ and $\epsilon>0$, there exists $B\in\mathcal{K}^p$ exhibiting simple Lyapunov spectrum satisfying $\sigma_p(A,B)<\epsilon$.
	\end{maintheorem}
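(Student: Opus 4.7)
If $A$ already has simple Lyapunov spectrum there is nothing to prove, so assume $\lambda_1(A)=\lambda_2(A)$. By the Multiplicative Ergodic Theorem this common value equals $-\tfrac{1}{2}\int_M \alpha\,d\mu$ (from Liouville's formula for $\det\Phi_A$ together with Birkhoff's Ergodic Theorem). The plan is to produce $B\in\mathcal{K}^p$ that coincides with $A$ outside a set $U$ of very small $\mu$-measure, with $\alpha$ left untouched and only $\beta$ modified. By Corollary~\ref{coro:lp}, once $\mu(U)$ is small enough this guarantees $\sigma_p(A,B)<\epsilon$, regardless of how large the pointwise modification is; and altering only the $(2,1)$-entry keeps $B$ inside $\mathcal{K}^p$. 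The real work is designing the $\beta$-perturbation so that $B$ has simple spectrum.

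The perturbation will live on a flow box. First, apply Lusin's theorem to the data $(\alpha,\beta)$ (and to any measurable invariant line field provided by Oseledets for $\Phi_A$) to obtain a compact $K\subset M$ with $\mu(K)$ close to $1$ on which these objects are uniformly continuous. Combining the Ambrose--Kakutani special-flow representation of $\varphi^t$ with Poincar\'e recurrence then gives a flow box $U=\bigcup_{s\in[0,T]}\varphi^s(\Sigma)$ over a Borel transversal $\Sigma\subset K$, with prescribed crossing time $T$ and arcs entirely inside $K$. On each such arc the perturbation is a pulse of bounded height $h$ and support of length $\delta$ inside $[0,T]$. Using the Volterra representation
\[
\Phi_B(T,\omega)=\Phi_A(T,\omega)+\int_0^T \Phi_A(T-s,\varphi^s(\omega))\,\Delta(\varphi^s(\omega))\,\Phi_B(s,\omega)\,ds,
\]
with $\Delta=B-A$, a direct planar computation shows that the pair $(h,\delta)$ can be tuned so that $\Phi_B(T,\omega)$ rotates a prescribed direction in $\mathbb{R}^2$ by a fixed angle $\theta_0>0$, uniformly for $\omega\in\Sigma$ (uniformity coming from uniform continuity on $K$). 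The crude bound $\|B-A\|_{L^p(\mu)}^p\le h^p\mu(U)$ then lets us shrink $\mu(\Sigma)$ at the very end to secure $\sigma_p(A,B)<\epsilon$.

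The final step is to convert the one-shot angular kick into separation of Lyapunov exponents. Inducing on $\Sigma$ produces a discrete-time $\mathrm{GL}(2,\mathbb{R})$-cocycle obtained by composing the first-return monodromy of $\Phi_A$ with the prescribed rotation by $\theta_0$ on each return; the first-return system is ergodic since $\varphi^t$ is. A Furstenberg-style positivity argument, exploiting that the rotation is deterministic while the unperturbed piece has trivial spectrum, should then yield a strict exponent gap for the induced cocycle, and a Kac--Abramov conversion back to continuous time gives $\lambda_1(B)>\lambda_2(B)$. I expect the main difficulty to be precisely this last step: guaranteeing that the deterministic rotation $\theta_0$ is not systematically cancelled by the dynamics of $\Phi_A$ during the long excursions between returns. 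Handling this will require a carefully coupled choice of the parameters $T$, $h$, $\theta_0$ and $\mu(\Sigma)$, and is where the kinetic constraint and the $L^p$ topology interact most non-trivially.
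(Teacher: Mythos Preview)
Your outline has a genuine gap at exactly the point you yourself flag: the ``Furstenberg-style positivity argument'' for the induced cocycle. A single deterministic rotation by $\theta_0$ inserted on each return does not, by itself, produce a spectral gap. In the simplest test case where the unperturbed monodromy between returns is close to the identity, your induced cocycle is essentially a product of rotations, which still has trivial spectrum. Furstenberg-type criteria need genuine randomness (strong irreducibility plus a proximal element in the support), and here the perturbation is deterministic while the unperturbed piece is assumed to have one-point spectrum, so neither hypothesis is available. Your closing paragraph acknowledges that the rotation might be ``systematically cancelled,'' but the deeper problem is that even when it is \emph{not} cancelled a pure rotation contributes nothing to exponential growth.

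The paper's mechanism is quite different and avoids this issue entirely. It uses two consecutive time-$1$ flowboxes $\mathcal V_R$, $\mathcal V_S$. On $\mathcal V_R$ it inserts a kinetic rotation that sends a carefully defined measurable direction $g(\omega)$ onto the fixed direction $v=(1,1)$; on $\mathcal V_S$ it inserts the constant traceless generator $S=\begin{pmatrix}0&1\\1&0\end{pmatrix}$, whose time-$1$ map expands $v$ by the factor $e$. Thus every passage through the double flowbox contributes an explicit stretch factor $e$ along an honest $\Phi_B$-invariant direction, and Birkhoff's theorem gives $\lambda(B,\omega,g(\omega))=\lambda(B_0,\omega,g(\omega))+\mu(\mathcal V_S)$. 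Because all three perturbations ($R_{2\pi}$, $R_\theta$, $S$) are traceless, Liouville's formula forces $\lambda_1(B)+\lambda_2(B)=2\lambda(B_0)$, so the strict increase of one exponent immediately yields simple spectrum. The missing ingredient in your plan is precisely this \emph{stretch} along a direction that the rotation step has first made invariant; a rotation alone cannot do the job.
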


	This result shows in particular that the $\sigma_p$-generic subset of $\mathcal{K}^p$ in which the trivial spectrum prevails, obtained in \cite{ABV}, can not contain $\sigma_p$-open sets. The stra\-te\-gy to prove that for each {kinetic} cocycle satisfying the integrability condition there is another kinetic cocycle, arbitrarily close with a simple spectrum, borrow some ideas of~\cite{AC0,BVi} where the authors obtained a similar result for the discrete time case and for more general cocycles. However, the context of continuous-time cocycles and the restriction to a very particular family of cocycles, such as the one we are considering in this paper, bring several difficulties that have no similarities in previous works. We have to face the situation that kinetic cocycles are rigid\footnote{\,The pertubative arguments in \cite{AC0,BVi} were easier to make because since $\dim\SL(2,\mathbb{R})=3$ three degrees of freedom were available. In our kinetic scenario we have to perform the same perturbations but with only a single degree of freedom.} and to obtain the desired perturbation we will make a step-by-step perturbation algorithm that we now describe:
\begin{itemize}
\item [(1)] We begin by coding $\varphi$ by a special flow to avoid overlaps and then consider a \textit{thin} time-1 flowbox $\mathcal{V}_R$ concatenated to an also \textit{thin} time-$1$ flowbox $\mathcal{V}_S$, so that o $\mathcal{V}_R\cup \mathcal{V}_S$ will be a time-$2$ flowbox;
\item [(2)] We \textit{cut} the original dynamics in $\mathcal{V}_R$ (respectively $\mathcal{V}_S$) and paste a simple constant traceless infinitesimal generator $R_{2\pi}$, whose solution basically rotates an angle $2\pi\eta$ in time-$\eta$. Outside $\mathcal{V}_R\cup \mathcal{V}_S$ we keep the same dynamic of $A$. By \emph{simple} we mean that we can easily obtain the identity by just doing a time-$1$ iteration. Call $A_0$ this new cocycle;
\item [(3)] Since $\mathcal{V}_R\cup \mathcal{V}_S$ is a thin flowbox, $A_0$ will be arbitrarily $\sigma^p$-near $A$. If $A_0$ has simple spectrum we are over, otherwise we prove Theorem~\ref{main.theorem} for $A_0$ instead of $A$;
\item [(4)] Inside $\mathcal{V}_R$ we \textit{cut} the dynamics of $A_0$ and \textit{paste} a tailor-made rotation $R$ such that for each $\w$ entering in $\mathcal{V}_R$ we rotate in time-1 a vector $v_\w$ into a fixed special direction  given by $v=(1,1)$. The vector $v_\w$ will be used to forcefully create an Oseledets direction so we can calculate the Lyapunov exponents. Here we rotate any angle by a small $\sigma_p$-perturbation since by (1) $\mathcal{V}_R$ is thin. 
A key observation is that the trace keeps unchanged, and that is the main motivation to the previous placement of $R_{2\pi}$ on $\mathcal V_R$. Call $B_0$ this new cocycle. If $B_0$ has simple spectrum we are over, otherwise we prove Theorem~\ref{main.theorem} for $B_0$ instead of $A_0$;
\item [(5)] Inside $\mathcal{V}_S$ we \textit{cut} the dynamics of $B_0$ and \textit{paste} a constant infinite\-si\-mal generator $S$ which stretch the vector $v$ in time-$1$ by a known magnitude $e$. No problem arises with the (eventually large) size of the uniform norm of the perturbation because the $\sigma^p$-distance is small due to the thickness of $\mathcal{V}_S$. Again the trace keeps unchanged. Call $B$ this new cocycle;
\item [(6)] Now we use ergodicity and compute the Lyapunov exponents of points who will inevitably have to return to $\mathcal{V}_R\cup \mathcal{V}_S$ infinitely many times;
\item [(7)] The stretch $S$ is a perturbation that is concerned with providing an expansion along an invariant direction. As it is difficult to find different kinetic cocycles which keep the same invariant directions here it becomes clear why we have chosen back there the identity after time $1$ (more precisely a rotation by $2\pi$) given by $R_{2\pi}$;
\item [(8)] Finally, the concern to keep the trace constant in (4) and (5) will bear fruit since if a perturbation increases a Lyapunov exponent and simultaneously the sum of the two Lyapunov exponents of the original cocycle and the perturbed one remains the same, then only one thing could have happened: the perturbed cocycle cannot have trivial spectrum but instead must display a Lyapunov exponent smaller than the Lyapunov exponent of the original cocycle.
\end{itemize}

The following table summarises the step-by-step construction from the linear differential systems $A$ to $B$:

\medskip

\begin{center}
\captionof{table}{Step-by-step description of the several perturbations}\label{table}
\begin{tabular}{c*{6}{c}r}
Cocycle              & $M\setminus(\mathcal{V}_R\cup \mathcal{V}_S)$ & $\mathcal{V}_R$ & $\mathcal{V}_S$  \\
\hline
$A$ & $A$ & $A$ & $A$   \\
$A_0$            & $A$ & $R_{2\pi}$ & $R_{2\pi}$   \\
$B_0$          & $A$ & $R$ & $R_{2\pi}$  \\
$B$     & $A$ & $R$ & $S$   \\
\end{tabular}
\end{center}

\medskip

We use an approach slightly different from the previous works \cite {AC0, BVi, AC01, CO}. Moreover, to avoid overlapping in the perturbations, we will encode the base flow through a special flow in a Kakutani Castle (as in \cite{AK,Rudo}).
On the other hand, to estimate the proximity of the perturbed cocycle to the original one, we also use a control over the measure of $\mathcal{V}_R\cup \mathcal{V}_S$ that support the two perturbations taking into account Corollary~\ref{coro:lp}.

It should be noted that, in addition to the difficulties inherent in the context of continuous-time cocycles, performing these perturbations (rotation and stretch) are not trivial, as we do not have the usual mechanisms like those that exist in the context in cocycles that evolve in $\text{GL}(2,\mathbb{R})$ or $\text{SL}(2,\mathbb{R})$, or, more generally, cocycles that satisfy the \emph{accessibility} condition (also recognized as \emph{twisting}) and \emph {saddle-conservative} (also known as \emph{pinching}), which allow the realization of these processes in a less demanding way, as, for example, in \cite{AB, AC0, B, BV2, BVi}. 

As our perturbations are all traceless we get from Theorem~\ref{main.theorem} that conservative kinetic cocycles have non-zero Lyapunov exponents $\sigma_p$-densely.

\begin{maincorollary}\label{main.corollary}
		Let $\varphi^t:M\to M$ be ergodic. For any $1\leq p<\infty$, $A\in\mathcal{K}^p_0$ and $\epsilon>0$, there exists $B\in\mathcal{K}^p_0$ exhibiting non-zero Lyapunov exponents satisfying $\sigma_p(A,B)<\epsilon$.
	\end{maincorollary}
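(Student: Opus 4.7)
My approach is to extract the corollary directly from Theorem~\ref{main.theorem}: I will argue that every perturbation used in the proof of that theorem can be chosen inside $\mathcal{K}_0$, so starting from $A\in\mathcal{K}^p_0$ the construction produces $B\in\mathcal{K}^p_0$, and then Liouville's formula converts simplicity of the spectrum into non-vanishing of the Lyapunov exponents.

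The first step is to fix $A\in\mathcal{K}^p_0$ and $\epsilon>0$ and run the construction used to prove Theorem~\ref{main.theorem}, producing a cocycle $B\in\mathcal{K}^p$ with $\sigma_p(A,B)<\epsilon$ and simple Lyapunov spectrum. The second step is to go through the successive stages $A\rightsquigarrow A_0\rightsquigarrow B_0\rightsquigarrow B$ summarised in Table~\ref{table} and verify at each stage that the new generator remains traceless. Outside $\mathcal V_R\cup\mathcal V_S$ the cocycle coincides with $A\in\mathcal{K}_0$. Inside $\mathcal V_R$ we paste $R_{2\pi}$ and then the tailor-made rotation $R$, both of which are infinitesimal generators of rotations in $\SO(2,\mathbb R)$, so their kinetic realisation as a matrix of the form \eqref{KK} has $a=0$. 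Inside $\mathcal V_S$ we paste $R_{2\pi}$ and then the stretch $S$, which is chosen to be a constant kinetic matrix with $a=0$ (concretely $\left(\begin{matrix}0&1\\ b&0\end{matrix}\right)$ for a suitable $b>0$, whose eigenvalues $\pm\sqrt b$ realise the required stretching along the invariant direction $v=(1,1)$). Hence $B\in\mathcal{K}^p_0$.

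The third step is to apply Liouville's (Abel--Jacobi--Liouville) formula to $B$: since $\operatorname{tr} B\equiv 0$,
\begin{equation*}
\det\Phi_B(t,\omega)=\exp\!\left(\int_0^t\operatorname{tr} B(\varphi^s(\omega))\,ds\right)=1
\end{equation*}
for every $t\in\mathbb R$ and $\mu$-a.e.\ $\omega\in M$, so $\Phi_B$ takes values in $\SL(2,\mathbb R)$. Consequently $\lambda_1(B)+\lambda_2(B)=0$, and combined with the strict inequality $\lambda_1(B)>\lambda_2(B)$ coming from Theorem~\ref{main.theorem} this yields $\lambda_1(B)=-\lambda_2(B)>0$, i.e.\ $B$ has non-zero Lyapunov exponents.

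The only non-trivial point is the bookkeeping in the second step: I need to be sure that the construction of Theorem~\ref{main.theorem} never leaves the traceless class. This is precisely the content of items~(4), (5) and (8) of the sketch preceding Table~\ref{table}, which explicitly stress that $R$, $R_{2\pi}$ and $S$ are designed so that the trace remains unchanged; once this is carefully verified at the level of formulas (in particular, that the explicit rotation $R$ is skew and that the stretch $S$ is picked within the subset $K_0$ rather than in the wider class $K$), the corollary follows immediately.
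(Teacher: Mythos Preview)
Your proposal is correct and follows the same approach as the paper, which explicitly observes that all perturbations $R_{2\pi}$, $R$, $S$ used in the proof of Theorem~\ref{main.theorem} are traceless, so that $B\in\mathcal{K}^p_0$ whenever $A\in\mathcal{K}^p_0$, and then $\lambda_1(B)+\lambda_2(B)=0$ combined with simple spectrum yields non-zero exponents. One small terminological caveat: the generator $R=R_\theta$ in \eqref{infgen} is not skew-symmetric and its solution \eqref{rotsol} is an \emph{elliptical} rotation rather than an $\SO(2)$ rotation, but it is indeed traceless (the $a$-entry in \eqref{KK} vanishes), which is the only property you actually use.
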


	Finally, we present Corollary~\ref{main.corollary} with a somewhat different look, namely by considering the one-dimensional Schr\"odinger operator on $L^2(\mathbb{R})$ and with an $L^p$ potential $Q\colon M\rightarrow \mathbb{R}$ given by:
	\begin{equation}\label{Sch}
	\begin{array}{cccc}
H_\w\colon &L^2(\mathbb{R}) & \longrightarrow & L^2(\mathbb{R}) \\& \phi & \longmapsto &  \left[-\frac{d^2}{dt^2}+Q(\varphi^t(\w))\right]\phi
\end{array}
\end{equation}

	In particular we like to describe the Lyapunov spectrum of the time-independent Schr\"odinger equation
	\begin{equation}\label{Sch2}
	H_\w\phi=E\phi,
	\end{equation}
	where $E\in \mathbb{R}$ is a given energy. Putting together \eqref{Sch} and \eqref{Sch2} we deduce a kinetic cocycle as in \eqref{damp2} but with $\alpha(\w)=0$ and $\beta(\w)=E-Q(\w)$ for all $\w\in M$. We fix the energy $E$ and focus on the LDS
\begin{equation}\label{damp22}
	\begin{array}{cccc}
A_E\colon &M & \longrightarrow & \mathbb{R}^{2\times2} \\& \w & \longmapsto &  \left(\begin{matrix}0&1\\ -E+Q(\omega) & 0\end{matrix}\right)
\end{array}
\end{equation}
called one-dimensional Schr\"odinger LDS with potential $Q$. As a direct consequence of Corollary~\ref{main.corollary} we have:

\begin{maincorollary}\label{main.corollary2}
		Let $\varphi^t:M\to M$ be ergodic. Given $1\leq p < \infty$, $\epsilon>0$ and a one-dimensional Schr\"odinger LDS with a fixed energy $E$ as in \eqref{damp22} and with potential $Q$, there exists $\tilde Q$ such that the one-dimensional Schr\"odinger LDS with the same energy $E$ and potential $\tilde Q$ exhibits non-zero Lyapunov exponents and $\|\tilde Q-Q\|_{L^p}<\epsilon$.
	\end{maincorollary}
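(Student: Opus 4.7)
The plan is to derive Corollary~\ref{main.corollary2} directly from Corollary~\ref{main.corollary}, with only a bookkeeping step to translate between the $\sigma_p$-distance on kinetic generators and the $L^p$-norm on potentials.

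First I would note that the Schr\"odinger LDS $A_E$ from \eqref{damp22} already lies in $\mathcal{K}_0^p$: its generator is traceless, of the form \eqref{KK} with $a=0$ and lower-left entry $-E+Q(\omega)$, which belongs to $L^p(\mu)$ since $Q\in L^p(\mu)$ and the constant $E$ is in $L^p$ of a probability space. Given the prescribed $\epsilon>0$, I would then apply Corollary~\ref{main.corollary} to $A_E$ with an auxiliary parameter $\epsilon'>0$ (to be fixed below), obtaining some $B\in\mathcal{K}_0^p$ with non-zero Lyapunov exponents and $\sigma_p(A_E,B)<\epsilon'$.

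Next I would reinterpret $B$ as a Schr\"odinger LDS. Because $B\in\mathcal{K}_0^p$ it automatically has the form $B(\omega)=\left(\begin{matrix}0&1\\ \tilde b(\omega)&0\end{matrix}\right)$ for some $\tilde b\in L^p(\mu)$, so setting $\tilde Q(\omega):=\tilde b(\omega)+E$ yields a measurable $\tilde Q\in L^p(\mu)$ and exhibits $B$ as the one-dimensional Schr\"odinger LDS with potential $\tilde Q$ and the same energy $E$. In particular $B$ still has the desired non-zero Lyapunov exponents, and the only thing left is to quantify $\|\tilde Q-Q\|_{L^p}$.

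For that final step, I observe that $A_E(\omega)$ and $B(\omega)$ differ only in the $(2,1)$-entry, so $A_E(\omega)-B(\omega)$ has a single nonzero entry equal to $Q(\omega)-\tilde Q(\omega)$ and its standard Euclidean operator norm equals $|Q(\omega)-\tilde Q(\omega)|$. Integrating gives $\hat\sigma_p(A_E,B)=\|Q-\tilde Q\|_{L^p}$. Inverting $\sigma_p=\hat\sigma_p/(1+\hat\sigma_p)$ and taking for instance $\epsilon':=\epsilon/(1+\epsilon)$ in the application of Corollary~\ref{main.corollary} then forces $\|\tilde Q-Q\|_{L^p}<\epsilon$, closing the argument. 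There is no real obstacle: the only point requiring care is checking that the perturbation supplied by Corollary~\ref{main.corollary} stays inside the Schr\"odinger subclass, which is automatic because $\mathcal{K}_0^p$ already encodes the structural constraints (traceless, first row $(0,1)$).
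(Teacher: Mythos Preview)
Your derivation is correct and matches the paper's approach: the paper simply states that Corollary~\ref{main.corollary2} is a direct consequence of Corollary~\ref{main.corollary}, and your argument spells out precisely that bookkeeping (identifying any $B\in\mathcal K_0^p$ with a Schr\"odinger generator for the potential $\tilde Q:=b+E$, and converting $\sigma_p$ into $\|\tilde Q-Q\|_{L^p}$ via the single nonzero entry of $A_E-B$). Nothing further is needed.
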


\section{On the perturbations}

	\subsection{Special flows}
Consider a measure space $\Sigma$, a map $\mathcal{T}\colon \Sigma\rightarrow{\Sigma}$, a $\mathcal{T}$-invariant probability measure $\tilde{\mu}$ defined in $\Sigma$ and a roof function
$h\colon \Sigma\rightarrow{\mathbb{R}^{+}}$ satisfying $h(\w)\geq H>0$, for some $H>0$ and all $\w\in{\Sigma}$, and $
 \int_{\Sigma}h(\w)d\tilde{\mu}(\w)<\infty
$. Define the space $M_{h}\subseteq{\Sigma\times{\mathbb{R}_+}}$ by
$$
M_h=\bigl\{(\w,t) \in \Sigma\times{\mathbb{R}_+}: 0 \leq t \leq h(\w) \bigr\}
$$
with the identification between the pairs $(\w,h(\w))$ and $(\mathcal{T}(\w),0)$. The semiflow defined
on $M_h$ by $S^s(\w,r)=(\mathcal{T}^{n}(\w),r+s-\sum_{i=0}^{n-1}h(\mathcal{T}^{i}(\w)))$,
where $n\in{\mathbb{N}}$ is uniquely defined by
$$
\sum_{i=0}^{n-1}h(\mathcal{T}^{i}(\w))\leq{r+s}<\sum_{i=0}^{n}h(\mathcal{T}^{i}(\w))
$$
is called a \emph{suspension semiflow}. If $\mathcal{T}$ is invertible then $(S^t)_t$ is a flow.
Furthermore, if $\ell$ denotes the one dimensional Lebesgue measure the measure $\mu=(\tilde\mu \times \ell)/\int h\, d\tilde\mu$ defined on $M_h$
by
$$
\int g \, d\mu= \frac{1}{\int h\, d\tilde\mu} \int \left( \int_0^{h(\w)} g(\w,t) dt \right)\, d\tilde\mu(\w),  \quad \forall g\in C^0(M_h)
$$
is a probability measure and it is invariant by the suspension semiflow $(S^t)_t$. Flows with such representation are called \emph{special flows} (or \emph{flows built under a function}) and are denoted by $(\varphi^t,\Sigma,\mathcal T, h)$. It is well-known (see \cite[Theorem 2]{Am}) that any ergodic flow is isomorphic to a special flow. 
Along this work we assume that the base flow is a special flow $(\varphi^t,\Sigma,\mathcal T, h)$ and, without any loss of generality, that $H>2$. To avoid overloading the notation we write $M$ instead of $M_h$. 

\subsection{Perturbations supported in time-$\tau$ flowboxes}
Take $A\in\mathcal{G}$ and a non-periodic orbit $\omega\in M$. We will consider a perturbation $B=B_{\w,\tau}$ of $A$ only along a segment of the orbit of $\omega$ with extremes $\w$ and $\varphi^{\tau}(\w)$ for $\tau>0$. Let $P\in\mathcal{G}$ be given and define $B\colon M\to \mathbb{R}^{2\times2}$ such that $B(\hat\w)=A(\hat\w)$ for all $\hat\w$ outside $\varphi^{[0,\tau]}(\w)=\{\varphi^{s}(\w): s\in[0, \tau]\}$ and $B(\hat\w)=P(\hat\w)$ otherwise. The map $B$ is called  a (local) \emph{perturbation of $A$ by $P$ supported on $\varphi^{[0,\tau]}(\w)$}. 
Given $\Sigma_0\subset \Sigma$ and $0\leq a<b$ we define the set
\begin{equation*}
\varphi^{[a,b]}(\Sigma_0)=\left\{\varphi^t(\w)\colon \w\in \Sigma_0,\, t\in[a,b]\right\}.
\end{equation*}
Given $A\in\mathcal{G}^1$, $P\in\mathcal{G}$, $\Sigma_0\subset\Sigma$ and $a>0$, we may extend the local perturbations of $A$ by $P$ to be supported on the flowbox $\varphi^{[a,b]}({\Sigma_0})$, with $0\leq a<b<H$, in the following way: for $\w\in \varphi^{[a,b]}({\Sigma_0})$ we project $\w$ in $\tilde\w\in\varphi^a(\Sigma_0)$ i.e. $\w=\varphi^r(\tilde\w)$, for some $0\leq r\leq b-a$, and let $B_{\tilde\w,b-a}$ be (local) perturbation of $A$ by $P=P_{\tilde\w}$ supported on $\varphi^{[0,b-a]}(\tilde\w)$ and define
\begin{equation*}\label{perturbacao global}
		B(\w):=\left\{\begin{array}{lll}A(\w),\,&&\text{if}\,\, \w\notin  \varphi^{[a,b]}({\Sigma_0})\\   B_{\tilde\w,b-a}(\w),\,&&\text{if}\,\, \w\in  \varphi^{[a,b]}({\Sigma_0})         \\    \end{array}\right..
	\end{equation*}
To distinguish the situations we refer for $B(\w)$ as a \emph{global perturbation of $A$ by $P$ supported in $\varphi^{[a,b]}(\Sigma_0)$}, where we always suppose that $P(\w)=P_{\tilde\w}(\w)$ for all $\w\in\varphi^{[a,b]}({\Sigma_0})$.

\subsection{Rotating and Stretching}

Next two results provide local and global arguments to rotate over prescribed directions under a small $\sigma_p$-perturbation. This will be used to generate a suitable invariant direction. The first one allows us to perform a uniform bounded kinetic perturbation in a local segment of orbit which rotates a given vector. The second one thickens Lemma~\ref{rot4cont} by broaden the rotation in a single orbit to rotations in a flowbox.

\begin{lemma}\label{rot4cont}
Given $\w\in M$, $u,v\in\mathbb{R}^2\setminus\{ 0\}$, $A\in\mathcal{K}^p$, there is $\upgamma\neq 0$, and a perturbation $B_{\w,1}\in\mathcal{K}^p$ of $A$ supported on $\varphi^{[0,1]}(\w)$ such that:
  \begin{enumerate}
  \item[(i)] $\|B_{\w,1}(\hat \w)\|\leq 4\pi^2$ for all $\hat\w$ on $\varphi^{[0,1]}(\w)$, and
  \item[(ii)] $\Phi_{B_{\w,1}}(1,\omega)u=\upgamma\, v$.
  \end{enumerate}

\end{lemma}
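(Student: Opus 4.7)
My plan is to replace the restriction of $A$ on the orbit segment $\varphi^{[0,1]}(\w)$ by a well-chosen constant kinetic generator from the one-parameter family
\[
P_\tau \,:=\, \begin{pmatrix} 0 & 1 \\ -\tau^2 & 0 \end{pmatrix}, \qquad \tau \in [0,2\pi].
\]
Each $P_\tau$ lies in $K$ and has operator norm $\max(\tau^2,1)\leq 4\pi^2$. Setting $B_{\w,1}(\hat\w):=P_\tau$ for $\hat\w\in\varphi^{[0,1]}(\w)$ and $B_{\w,1}(\hat\w):=A(\hat\w)$ elsewhere (for $\tau$ to be chosen), item (i) is automatic, and $B_{\w,1}\in\mathcal K^p$ since it coincides with $A\in\mathcal K^p$ off a $\mu$-null set. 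As $P_\tau$ is constant along the segment, its Carath\'eodory time-$1$ flow is the matrix exponential
\[
\Phi_{P_\tau}(1) \,=\, \begin{pmatrix}\cos\tau & \tau^{-1}\sin\tau \\ -\tau\sin\tau & \cos\tau\end{pmatrix} \,\in\, \SL(2,\R),
\]
(with the natural limit $\bigl(\begin{smallmatrix}1&1\\0&1\end{smallmatrix}\bigr)$ at $\tau=0$). Thus $\Phi_{B_{\w,1}}(1,\w)=\Phi_{P_\tau}(1)$, and the problem reduces to finding $\tau_0\in[0,2\pi]$ with $\Phi_{P_{\tau_0}}(1)u\in\R v$; since $\det\Phi_{P_{\tau_0}}(1)=1$, the scalar $\upgamma$ defined by $\Phi_{P_{\tau_0}}(1)u=\upgamma v$ is then automatically nonzero.

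The crux is therefore to show that the continuous map $\tau\in[0,2\pi]\mapsto[\Phi_{P_\tau}(1)u]\in\mathbb{RP}^1$ is surjective. I would restrict to the half-period $\tau\in[\pi,2\pi]$ and argue by a short winding observation: since $\Phi_{P_\pi}(1)=-\text{Id}$ and $\Phi_{P_{2\pi}}(1)=\text{Id}$, the path $\tau\mapsto\Phi_{P_\tau}(1)u$ joins $-u$ to $u$ inside $\R^2\setminus\{0\}$ (non-vanishing because $\Phi_{P_\tau}(1)\in\SL(2,\R)$). In any continuous lift of its argument to $\R$, the total angular change is congruent to $\pi$ modulo $2\pi$, hence has magnitude at least $\pi$. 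Since $\mathbb{RP}^1\cong\R/\pi\mathbb{Z}$, the induced projective arc already covers the full circle and in particular hits $[v]$ at some $\tau_0\in[\pi,2\pi]$. Setting $\upgamma v:=\Phi_{P_{\tau_0}}(1)u$ then yields item (ii).

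I expect the main subtlety to lie in formalising this winding argument: one has to note that the continuous lift of $\arg(\Phi_{P_\tau}(1)u)$ is well defined on $[\pi,2\pi]$ thanks to the non-vanishing of $\Phi_{P_\tau}(1)u$, and that the ``net rotation $\equiv\pi\pmod{2\pi}$'' step follows from the observation that $\arg(u)$ and $\arg(-u)$ differ by $\pi$ modulo $2\pi$. The verification of the kinetic form, $L^p$-integrability, uniform norm bound $4\pi^2$, and the agreement of the Carath\'eodory solution with the matrix exponential on the constant segment are all direct from the explicit formulas.
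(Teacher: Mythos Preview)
Your proposal is correct and uses the same one-parameter family of constant kinetic generators $P_\tau=R_\tau$ as the paper. The paper, however, simply sets $\theta=\measuredangle(\mathbb{R}u,\mathbb{R}v)$ and asserts without further argument that the time-$1$ map $\Phi_{R_\theta}(1,\omega)$ carries $u$ into $\mathbb{R}v$. Because $\Phi_{R_\theta}(1,\omega)$ is an \emph{elliptical} rather than a Euclidean rotation, this shortcut does not literally hold: for $u=(1,1)$, $v=(1,-1)$ and $\theta=\pi/2$ one computes $\Phi_{R_{\pi/2}}(1,\omega)u=(2/\pi,-\pi/2)\notin\mathbb{R}v$. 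Your winding argument on $[\pi,2\pi]$---using $\Phi_{P_\pi}(1)=-\text{Id}$ and $\Phi_{P_{2\pi}}(1)=\text{Id}$ to force the projective image of $u$ to sweep all of $\mathbb{RP}^1$---is precisely the intermediate-value reasoning needed to make this step rigorous while remaining in the same kinetic family and respecting the same norm bound $4\pi^2$.
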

	
\begin{proof}
Let $\theta=\measuredangle(\mathbb{R}u,\mathbb{R}v)\in\, ]0,2\pi]$ measured clockwise. Set a constant infinitesimal generator $R\colon M\to \mathbb{R}^{2\times2}$ given by 
\begin{equation}\label{infgen}
R(\w)=R_{\theta}(\w)=\left(\begin{matrix} 0 & 1\\ -\theta^2 & 0\end{matrix}\right).
\end{equation} 
We consider the perturbation $B=B_{\w,1}\in\mathcal{K}^p$ of $A$ by $R$ supported on $\varphi^{[0,1]}(\w)$. 
The infinitesimal generator in \eqref{infgen} generates a linear differential system with fundamental classical solution \eqref{eq:LDS2} given, for all $\w\in M$ and $t\in\mathbb{R}$ by the `clockwise elliptical rotation' defined by:
\begin{equation}\label{rotsol}
\Phi_{R}(t,\w)=\left(\begin{matrix}\cos(\theta t)& \theta^{-1}\sin(\theta t)\\ -\theta\sin(\theta t) & \cos(\theta t)\end{matrix}\right),
\end{equation} 
and such that  $\Phi_{B}(1,\w)u=\Phi_{R}(1,\w)u=\upgamma v$, for some $\upgamma\neq0$ fulfilling (ii). 
\end{proof}


From Corollary~\ref{coro:lp} it follows that we may extend the local perturbation $B_{\w,1}$ given by the rotation $R_\theta(\w)$ as in Lemma~\ref{rot4cont}, to a global perturbation, tuned for each orbit segment, to obtain a new generator that is $\sigma_p$-close to the original, once we have a smaller measure of the  flowbox were the perturbation takes place. This is pointed in the next basic measure theoretic result which is an immediate consequence of Corollary~\ref{coro:lp}.

\begin{lemma}[Global]\label{rot4pert}
  For all $1\leq p <\infty,$ $A\in\mathcal{G}^p$, $a>0$ and $\varepsilon>0$, there exists a measurable set $\Sigma_0\subset \Sigma$ with $\tilde\mu(\Sigma_0)>0$ such that for any global perturbation $B\in\mathcal{G}^p$ of $A$ supported in the flowbox $\varphi^{[a,a+1]}(\Sigma_0)$, with $\|B(\varphi^t(\w))\|\leq 4\pi^2$ for all $\w\in \Sigma_0$ and $t\in[a,a+1]$, we have that $\sigma_p(A,B)<\varepsilon$.
\end{lemma}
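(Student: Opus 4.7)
The plan is to observe that the uniform bound $\|B\|\leq 4\pi^2$ on the flowbox reduces the lemma to a purely measure-theoretic question about the $\mu$-measure of $\mathcal F:=\varphi^{[a,a+1]}(\Sigma_0)$. Since $B$ coincides with $A$ off $\mathcal F$, the elementary convexity inequality $\|A-B\|^p\leq 2^{p-1}(\|A\|^p+\|B\|^p)$ yields
\begin{equation*}
\hat\sigma_p(A,B)^p\;=\;\int_{\mathcal F}\|A-B\|^p\,d\mu\;\leq\;2^{p-1}\int_{\mathcal F}\|A\|^p\,d\mu+2^{p-1}(4\pi^2)^p\,\mu(\mathcal F),
\end{equation*}
and both summands on the right depend only on $\mathcal F$, not on the specific perturbation $B$. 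This is what will give the uniformity of the estimate.

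From here I would invoke the absolute continuity of the integral (which is exactly Lemma~\ref{simples}) applied to $A\in\mathcal G^p$ to produce a threshold $\delta>0$ such that $\mu(\mathcal F)<\delta$ forces both summands above to be smaller than $\varepsilon^p/2$, whence $\hat\sigma_p(A,B)<\varepsilon$ and therefore $\sigma_p(A,B)\leq\hat\sigma_p(A,B)<\varepsilon$. This is essentially the content of Corollary~\ref{coro:lp} rewrapped in a form uniform over all admissible $B$'s; the uniform sup bound $4\pi^2$ coming from Lemma~\ref{rot4cont}(i) is precisely what allows the second term to be bounded by a quantity depending only on $\mathcal F$.

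To finish I would produce a set $\Sigma_0$ with $\tilde\mu(\Sigma_0)>0$ and $\mu(\mathcal F)<\delta$. Using the special-flow representation, the flowbox $\mathcal F$ corresponds to $\Sigma_0\times[a,a+1]$ inside $M_h$, so
\begin{equation*}
\mu(\mathcal F)=\frac{\tilde\mu(\Sigma_0)}{\int h\,d\tilde\mu}.
\end{equation*}
It then suffices to pick any measurable $\Sigma_0\subset\Sigma$ with $0<\tilde\mu(\Sigma_0)<\delta\int h\,d\tilde\mu$, which is available in the Kakutani-Castle setup adopted throughout the paper.

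There is no real obstacle in the argument: the lemma is a packaging of Lemma~\ref{simples} designed to fit the step-by-step perturbation scheme of Table~\ref{table}, where the perturbation (a rotation angle) varies from orbit to orbit. The only point requiring care is that the estimate must be uniform over all admissible $B$, and this uniformity is supplied precisely by the hypothesis $\|B\|\leq 4\pi^2$; once this is noticed, the proof reduces to making $\mu(\mathcal F)$ small enough by shrinking $\tilde\mu(\Sigma_0)$.
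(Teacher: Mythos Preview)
Your proof is correct and follows essentially the same route as the paper, which simply declares the lemma ``an immediate consequence of Corollary~\ref{coro:lp}'' without further detail. If anything, you are more careful than the paper: you make explicit that the $\delta$ must be chosen \emph{uniformly} over all admissible perturbations $B$, and you correctly identify that the hypothesis $\|B\|\leq 4\pi^2$ is precisely what allows this (splitting $\|A-B\|^p$ via convexity and controlling the $B$-part by $\mu(\mathcal F)$ alone), whereas a naive appeal to Corollary~\ref{coro:lp} applied to $A-B$ would yield a $\delta$ depending on the particular $B$.
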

Let us fix a suitable constant and traceless infinitesimal generator
\begin{equation}\label{ST}
S=\begin{pmatrix}0&1\\1 &0\end{pmatrix}.
\end{equation}
As $S$ has simple expression we integrate it obtaining:
\begin{equation}\label{ST2}
\Phi_S(t,\w)=e^{St}=\begin{pmatrix}\cosh t&\sinh t\\\sinh t & \cosh t \end{pmatrix}
\end{equation}
We notice that \eqref{ST2} has eigenvalues $\sigma_1^S=e^t$ and $\sigma_2^S=e^{-t}$ with associated eigenvectors $v_1^S=(1,1)$ and $v_2^S=(-1,1)$,  respectively. Observe that $E_1^S=\mathbb{R}\cdot v_1^S$ is a unstable direction and $E_2^S=\mathbb{R}\cdot v_2^S$ is a stable direction. 
\medskip

Next trivial remark will be of utmost importance in the sequel because it combines three main ingredients: \emph{invariance} of certain 1-dimensional directions, some \emph{expansiveness} along this direction and all this done in \emph{traceless} kinetic infinitesimal generators.

\begin{remark}[Invariance and stretch]\label{estica}
Considering $\theta=2\pi$ in \eqref{rotsol}, say $R_{2\pi}$, we get
\begin{equation}\label{ST3}
e\cdot \,v_1^S=e\cdot\Phi_{R_{2\pi}}(1,\w)\,v_1^S=\Phi_S(1,\w)\,v_1^S.
\end{equation}
\end{remark}

\section{Proof of Theorem~\ref{main.theorem}}

Let $A\in\mathcal{K}^p$, $1\leq p<\infty$ and $\epsilon>0$ be given. We assume that $\Phi_A$ has a single Lyapunov exponent $\lambda(A)$. The sequence of perturbations are summarized in Table \ref{table}.

\subsection{Defining $A_0$ (picking out good coordinates):} Let $\Sigma_0\subset\Sigma$ be as in Lemma~~\ref{rot4pert}.
For $r>0$ we assume that we have flowboxes defined by $\mathcal V_R:=\varphi^{[0,1]}(B_r)$ and $\mathcal V_S:=\varphi^{[1,2]}(B_r)$, where $B_r\in \Sigma_0$ is such that $0<\tilde \mu(B_r)\leq r$. Consider $A_0\in\mathcal{K}^1$ defined as:

 \begin{equation*}
		A_0(\w):=\left\{\begin{array}{ll} A(\w),& \text{if $\w\notin  \mathcal V_R\cup\mathcal V_S$}\\ R_{2\pi},& \text{if $\w\in \mathcal V_R\cup\mathcal V_S$}   \end{array}\right..
	\end{equation*}
By Corollary~\ref{coro:lp} if $r$ is sufficiently small when compared with $\epsilon$ we get 
\begin{equation}\label{epsilon0}
	\sigma_p(A,A_0)<\frac{\epsilon}{3}.
	\end{equation}
If $\Phi_{A_0}$ has simple spectrum we are over. Otherwise, we prove the theorem for $A_0$ instead of $A$.

\subsection{Defining $B_0$ (rotating on $\mathcal V_R$):}
Set 
$$k(\w)=\inf_{t\geq0}\Bigl\{t\colon\varphi^{-t}(\w)\in \varphi^1(B_r)\Bigr\}.$$ We will define the a random vector field $g(\w)$. We start with the normalized image under the cocycle associated with $\Phi_{A_0}$ of  the vector $v=\frac{v_1^S}{\|v_1^S\|}=\left(\frac{\sqrt 2}{2},\frac{\sqrt 2}{2}\right)$:
	\[g(\omega)
	:=
	\begin{cases}
	v, & \text{if}\quad\omega\in \varphi^{1}(B_r)\\ \frac{\Phi_{A_0}(k(\w),\varphi^{-k(\w)}(\omega))v}{\|\Phi_{A_0}(k(\w),\varphi^{-k(\w)}(\omega))v\|},&\text{if}\quad\omega\notin (\mathcal{V}_R\setminus B_r)
	\end{cases}
	\]
	and set from now on $E(\w)=\text{span}\:\{g(\w)\}$.

	Let $B_0$ be a perturbation of $A_0$ supported in the flowbox $\mathcal V_R$ as in Lemma~\ref{rot4pert} such that for all $\w\in B_r$ we have $\Phi_{B_0}(1,\w)g(\w)=\kappa v$ for some $\kappa\in\R$, that is:
	 \begin{equation*}
		B_0(\w):=\left\{\begin{array}{ll} R(\w),& \text{if $\w\in \mathcal V_R$}\\ A_0(\w),&\text{otherwise }    \end{array}\right..
	\end{equation*}
	Observe that the rotation must be tuned for each $\w_0\in B_r$, in the sense that for $\w=\varphi^t(\w_0)\in\mathcal{V}_R$, with $0\leq t\leq 1$, we set $R(\w)=R_\theta(\w_0)$ with $\theta=\measuredangle(g(\w_0),v)$. In particular, for all $\w_0\in B_r$ we have $\Phi(1,\w_0)g(\w)=\kappa v$, for some $\kappa\in\R$. Moreover, $A_0$ and $B_0$ have the same trace. Indeed, $A_0=B_0$ outside $\mathcal V_R$ and in $\mathcal V_R$ we have $B_0=R$ and $A_0=R_{2\pi}$, which are both traceless (see \eqref{infgen}). 
	Therefore, by Liouville's formula for all $\w$ and $t\geq0$
	\begin{equation}\label{det1}
	\det \Phi_{B_0}(t,\w)=\det \Phi_{A_0}(t,\w).
	\end{equation}
	For $\w\in \mathcal{V}_R\setminus B_r$ define 
	\begin{equation}\label{gup}
	g(\w)=\frac{\Phi_{B_0}(k(\w),\varphi^{-k(\w)}(\omega))v}{\|\Phi_{B_0}(k(\w),\varphi^{-k(\w)}(\omega))v\|}.
	\end{equation}
Notice that for $\w\in B_r$, since $\Phi_{B_0}(1,\w)\mathbb{R}g(\w)=\mathbb{R} v$ we get
\begin{equation}\label{inv}
	\Phi_{B_0}(1,\w)\R g(\w)(\w)=\R g(\varphi^1(\w)).
	\end{equation}
Let $\tilde\w\in\varphi^1(B_r)$ and $\tau>0$ be such that $\varphi^t(\tilde\w)\notin \mathcal{V}_R$ for all $t\in]0,\tau[$. Then, for all $t\in[0,\tau]$ we have the $\Phi_{B_0}$-invariance of $g$:
\begin{equation}\label{inv2}
\Phi_{B_0}(t,\tilde\w)\mathbb{R}g(\tilde\w)=\Phi_{B_0}(t,\tilde\w)\mathbb{R}v=\Phi_{A_0}(t,\tilde\w)\mathbb{R}v=\mathbb{R} g(\varphi^{t}(\tilde\w)).
\end{equation}
If $\varphi^t(\tilde\w)\in \mathcal{V}_R$ for some $t\in]0,\tau[$ then considering $s>0$ such that $\varphi^s(\w)\in B_r$ we get:
\begin{eqnarray*}
\Phi_{B_0}(t,\tilde\w)\mathbb{R}g(\tilde\w)&=&\Phi_{B_0}(t-s,\varphi^s(\tilde\w))\Phi_{A_0}(s,\tilde\w)\mathbb{R}v\\
&=&\Phi_{B_0}(t-s,\varphi^s(\tilde\w))\mathbb{R}g(\varphi^{s}(\tilde\w))\\
&\overset{\eqref{gup}}{=}&\mathbb{R} g(\varphi^{t}(\tilde\w)).
\end{eqnarray*}
	Finally, \eqref{inv}, \eqref{inv2} and last equality gives that the vector field $g$ is $\Phi_{B_0}$-invariant. 
		
Again by Corollary~\ref{coro:lp} if $r$ is sufficiently small we get 
\begin{equation}\label{epsilon1}
	\sigma_p(A_0,B_0)<\frac{\epsilon}{3}.
	\end{equation}

If $\Phi_{B_0}$ has simple spectrum we are over. Otherwise, we prove the theorem for $B_0$ instead of $A_0$.

\subsection{Defining $B$ (stretching on $\mathcal V_S$):}

We define
 \begin{equation*}
		B(\w):=\left\{\begin{array}{ll} B_0(\w),& \text{if $\w\notin \mathcal V_S$}\\ S,&\text{if $\w\in \mathcal V_S$}     \end{array}\right..
	\end{equation*}
Observe that $B$ and $B_0$ have the same trace. Indeed, $B=B_0$ outside $\mathcal V_S$ and in $\mathcal V_S$ we have $B_0=R_{2\pi}$ which are both traceless (see \eqref{infgen} and \eqref{ST}). Therefore, by Liouville's formula and \eqref{det1} for all $\w$ and $t\geq0$
	\begin{equation}\label{det2}
	\det \Phi_B(t,\w)=\det \Phi_{B_0}(t,\w)=\det \Phi_{A_0}(t,\w).
	\end{equation}
	From Corollary~\ref{coro:lp}, once more, if $r$ is sufficiently small we get 
	\begin{equation}\label{epsilon2}
	\sigma_p(B_0,B)<\frac{\epsilon}{3}.
	\end{equation}
	Notice that the invariance of the direction $E(\w)$ under $\Phi_B$ fails when $\varphi^t(\w)$ enters $\mathcal{V}_S$. However, for $\tilde\w\in\varphi^1(B_r)$ we have  by~\eqref{ST3} and~\eqref{inv2} 
	\[
	\Phi_B(1,\tilde\w)\R g(\tilde\w)=\Phi_S(1,\tilde\w)\R v = \R v=\R\Phi_{R_{2\pi}}(1,\tilde\w)v=\Phi_{A_0}(1,\tilde\w)\R v= \R g(\varphi^1(\tilde\w))
	\]
	and so
	\begin{equation}\label{inv2}
\Phi_S(1,\tilde\w) E(\tilde\w) = E(\varphi^1(\tilde\w)),
	\end{equation}
	which will be enough for our purposes; see Figure~\ref{figure}.
\begin{figure}[h]
\begin{center}
  \includegraphics[scale=.15]{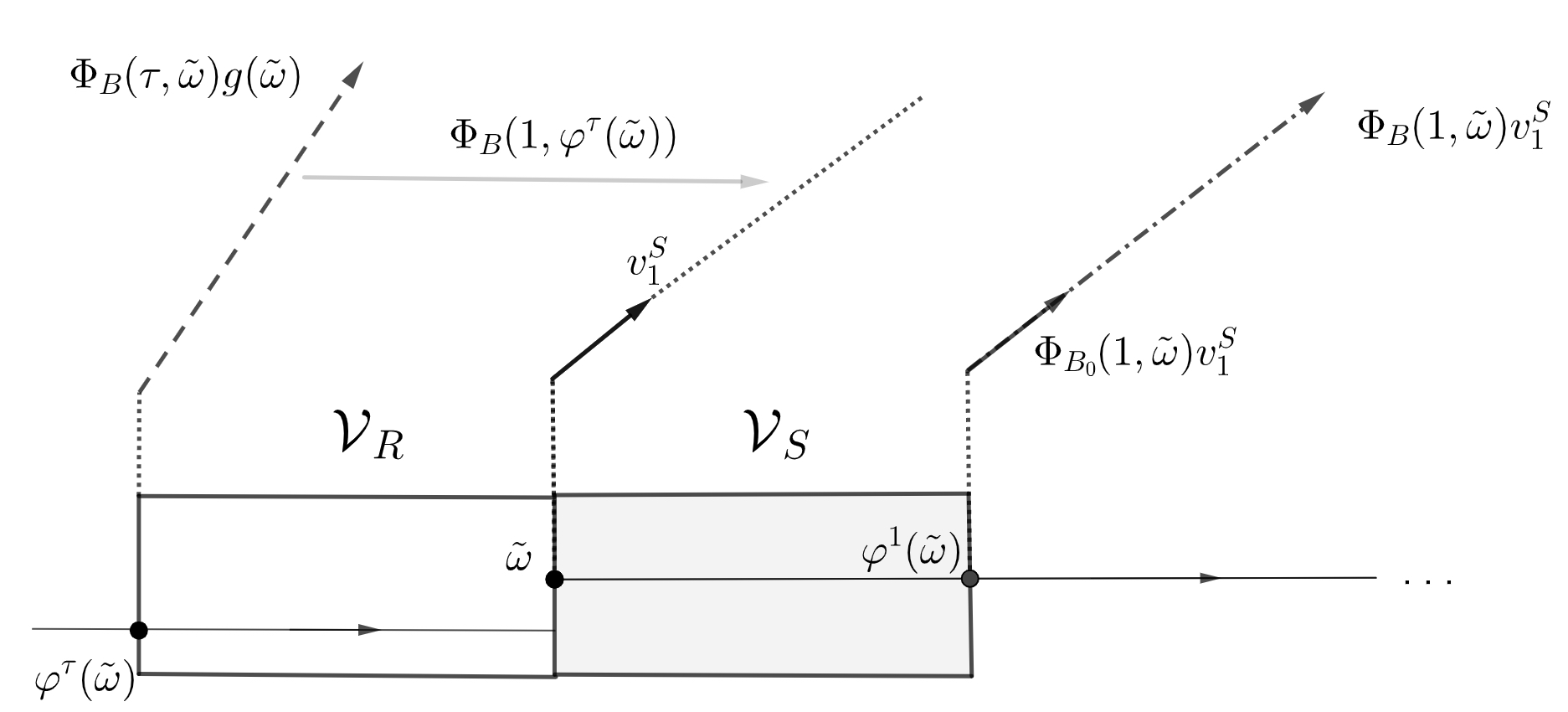}
  \caption{The traceless perturbation scheme with the invariant directions and the stretch effect.}
\label{figure}
\end{center}
\end{figure}

	Let $\lambda_1(B)\geq\lambda_2(B)$ be the Lyapunov exponents of $\Phi_B$. We assume that $\Phi_{B_0}$ has one-point spectrum, say $\lambda_1(B_0)=\lambda_2(B_0)=\lambda(B_0)$, because otherwise the theorem is proved. Let $\lambda(B_0)$ be the single Lyapunov exponent of $\Phi_{B_0}$. Hence we have $\lambda(B_0)=\lambda(B_0,\w,v_1^S)$ for a.e. $\w$.
By the Oseledets theorem we have
\begin{equation}\label{Cor1}
2\lambda(B_0)=\int\log\bigl|\det(\Phi_{B_0}(1, \w))\bigr|d\mu
\end{equation}
and
\begin{equation}\label{Cor2}
\lambda_1(B)+\lambda_2(B)=\int\log\bigl|	\det(\Phi_B(1,\omega))\bigr|d\mu.
\end{equation}
The two previous equalities together with \eqref{det2} allows us to conclude that \begin{equation}\label{2lbaB0iguallbda1+lbda2}
2\lambda(B_0)=\lambda_1(B)+\lambda_2(B)
\end{equation} and so, if we show that $\lambda_1(B)>\lambda(B_0)$ then we get $\lambda_1(B)>\lambda_2(B)$ and Theorem~\ref{main.theorem} is proved. 
Recall that the random vector field $g$ is invariant by $\Phi_{B_0}$ but in what $\Phi_B$ concerns, the invariance fails as the base dynamics enters $\mathcal V_S$. However, by~\eqref{inv2} the invariance is recovered in the moment the base dynamics is leaving $\mathcal V_S$.

For $\w \in M$ let us consider the real map $b_0(\cdot,\w)$ for all $t\in\R$ in such a way that
\begin{equation}\label{cocycle0}
b_0(t,\omega)g(\varphi^t(\w))=\Phi_{B_0}(t,\omega) g(\omega).
\end{equation}
\begin{claim}\label{cocycle}
The map $b_0(t,\omega)$ forms a cocycle over $\varphi^t$. 
\end{claim}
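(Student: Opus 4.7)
The plan is to verify the two defining properties of a (multiplicative) cocycle directly from the defining equation \eqref{cocycle0} together with the cocycle identity of the matrix-valued map $\Phi_{B_0}$. The construction only makes sense because $\R g(\w)$ has been shown to be $\Phi_{B_0}$-invariant in the paragraphs preceding the claim, so the scalar $b_0(t,\w)$ is uniquely and measurably defined by \eqref{cocycle0}: given the direction invariance, the vector $\Phi_{B_0}(t,\w)g(\w)$ is a scalar multiple of $g(\varphi^t(\w))$, and since $g$ is a unit vector that multiple is unique. This well-definedness is the only subtle point; once it is in hand, the rest is a routine algebraic manipulation.

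For the normalization at $t=0$, I would observe that $\Phi_{B_0}(0,\w)=\mathrm{Id}$, so \eqref{cocycle0} reads $b_0(0,\w)g(\w)=g(\w)$, and since $g(\w)\neq 0$ this forces $b_0(0,\w)=1$. For the cocycle identity, fix $s,t\in\R$ and apply the cocycle property of $\Phi_{B_0}$ to write
\begin{align*}
b_0(t+s,\w)\,g(\varphi^{t+s}(\w))
&=\Phi_{B_0}(t+s,\w)\,g(\w)\\
&=\Phi_{B_0}(t,\varphi^s(\w))\,\Phi_{B_0}(s,\w)\,g(\w)\\
&=\Phi_{B_0}(t,\varphi^s(\w))\,b_0(s,\w)\,g(\varphi^s(\w))\\
&=b_0(s,\w)\,b_0(t,\varphi^s(\w))\,g(\varphi^{t+s}(\w)),
\end{align*}
where in the third equality I use \eqref{cocycle0} for the pair $(s,\w)$ and in the fourth I use it for the pair $(t,\varphi^s(\w))$ together with the scalar linearity of $\Phi_{B_0}(t,\varphi^s(\w))$. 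Cancelling the nonzero vector $g(\varphi^{t+s}(\w))$ yields
\[
b_0(t+s,\w)=b_0(t,\varphi^s(\w))\,b_0(s,\w),
\]
which is exactly the (multiplicative) cocycle identity over $\varphi^t$.

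I do not anticipate any genuine obstacle here; the only thing to be careful about is the domain on which the identity holds. The invariance of $\R g$ under $\Phi_{B_0}$ was proved for $\mu$-almost every $\w$ (points whose orbit meets $\mathcal{V}_R$ under the prescribed rules), and $g$ was defined only outside $\mathcal{V}_R\setminus B_r$; accordingly the identities above are to be interpreted for $\w$ in the full-measure set where $g$ is defined and the invariance applies, which is enough for the subsequent ergodic-theoretic use of $b_0$. The continuity of $t\mapsto \Phi_{B_0}(t,\w)$ transfers to continuity of $t\mapsto b_0(t,\w)$, and joint measurability follows from the joint measurability of $(t,\w)\mapsto \Phi_{B_0}(t,\w)g(\w)$ together with the measurability of $g$, so $b_0$ qualifies as a bona fide (scalar) linear cocycle over $\varphi^t$.
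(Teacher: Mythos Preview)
Your proof is correct and follows essentially the same route as the paper: verify $b_0(0,\w)=1$ from $\Phi_{B_0}(0,\w)=\mathrm{Id}$, then chain the cocycle identity of $\Phi_{B_0}$ with two applications of \eqref{cocycle0} to obtain $b_0(t+s,\w)=b_0(t,\varphi^s(\w))\,b_0(s,\w)$. One small inaccuracy in your closing remarks: $g$ is in fact defined on $\mathcal{V}_R\setminus B_r$ as well, via \eqref{gup}, so the invariance and the scalar $b_0$ are well defined everywhere along orbits, not merely on a full-measure set outside that region.
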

Indeed, since $\Phi_{B_0}(0,\omega)=Id$ for all $\w\in M$ we have $b_0(0,\omega)=1$ and for all $s,t$, evaluating $b_0(t+s,\w)$ at $g(\varphi^{t+s}(\w))$, we have 
\begin{eqnarray*}
b_0(t+s,\w)g(\varphi^{t+s}(\w))&\overset{\eqref{cocycle0}}{=}&\Phi_{B_0}(t+s,\omega) g(\omega)\\
&=&\Phi_{B_0}(t,\varphi^s(\w))\cdot \Phi_{B_0}(s,\w) g(\w)\\
&\overset{\eqref{cocycle0}}{=}&\Phi_{B_0}(t,\varphi^s(\w))\cdot b_0(s,\w) g(\varphi^s(\w))\\
&=&b_0(s,\w) \,\Phi_{B_0}(t,\varphi^s(\w))g(\varphi^s(\w))\\
&=&b_0(t,\varphi^s(\w))b_0(s,\w)g(\varphi^{t+s}(\w)),
\end{eqnarray*}
and so $b_0(t+s,\w)=b_0(t,\varphi^s(\w))b_0(s,\w)$. \\

Since the random vector field $g$ is not completely invariant by $\Phi_B$ we consider two distinct situations. Set $\varphi^{\{1,2\}}(B_r)=\varphi^{1}(B_r)\cup\varphi^2(B_r)$. For $\w \in M$ and $\tau\geq0$ such that $\varphi^t(\w)\notin\mathcal V_S\setminus\varphi^{\{1,2\}}(B_r)$, for all $0\leq t\leq \tau$, we consider
the real map $b(\cdot,\w)$ for all $t\in[0,\tau]$ in such a way that
\begin{equation}\label{cocycle1eumterco}
b(t,\omega)g(\varphi^t(\w))=\Phi_{B}(t,\omega) g(\omega)
\end{equation} and, for all $\w\in\varphi^1(B_r)$, we set $b(1,\w)\in\R$ in such a way that 
\begin{equation}\label{cocycle1emeio}
\Phi_{B}(1,\w)g(\w)=b(1,\omega)g(\varphi^1(\w)).
\end{equation}
If $\varphi^t(\w)\notin \mathcal V_S\setminus\varphi^{\{1,2\}}(B_r)$, for all $0\leq t\leq \tau$, we have $B(\varphi^t(\w))=B_0(\varphi^t(\w))$ and
\begin{equation}\label{cocycle1tresquartos}
b(t,\omega)g(\varphi^t(\w))=\Phi_{B}(t,\w)g(\w)=\Phi_{B_0}(t,\w)g(\w)=b_0(t,\omega)g(\varphi^t(\w)).
\end{equation}
In particular this holds between the output of $\mathcal{V}_S$ to the next input in $\mathcal{V}_S$.

\begin{claim}\label{cocycle2}
If $\varphi^t(\w),\varphi^s(\w)\notin \mathcal V_S\setminus \varphi^{\{1,2\}}(B_r)$, $b(t,\omega)$ forms a cocycle over $\varphi^t$ in the sense that $b(t+s,\w)=b(t,\varphi^s(\w))b(s,\w)$.
\end{claim}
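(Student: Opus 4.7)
The plan is to mimic the chain of equalities used in the proof of Claim~\ref{cocycle} verbatim, replacing $b_0$ and $\Phi_{B_0}$ by $b$ and $\Phi_B$ throughout, and invoking the cocycle property of the linear RDS $\Phi_B$. The role of the hypothesis that $\varphi^s(\w),\varphi^t(\w)\notin \mathcal V_S\setminus\varphi^{\{1,2\}}(B_r)$ is precisely to ensure that each of the scalars $b(s,\w)$, $b(t,\varphi^s(\w))$ and $b(t+s,\w)$ is well-defined: at every instant where we need to read off a $b$-value, the base orbit is either outside $\mathcal V_S$ altogether (so~\eqref{cocycle1eumterco} applies) or is sitting on $\varphi^1(B_r)$ about to cross $\mathcal V_S$ in a single unit of time to $\varphi^2(B_r)$ (so~\eqref{cocycle1emeio} applies, being well-posed thanks to the line invariance recorded in~\eqref{inv2}).

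The explicit computation I would carry out starts from the defining relation evaluated at $t+s$,
\[
b(t+s,\w)\,g(\varphi^{t+s}(\w))=\Phi_B(t+s,\w)g(\w),
\]
decomposes $\Phi_B(t+s,\w)=\Phi_B(t,\varphi^s(\w))\circ\Phi_B(s,\w)$ using the cocycle property of $\Phi_B$, substitutes $\Phi_B(s,\w)g(\w)=b(s,\w)\,g(\varphi^s(\w))$, pulls the scalar $b(s,\w)$ out by linearity of $\Phi_B(t,\varphi^s(\w))$, and finally substitutes $\Phi_B(t,\varphi^s(\w))g(\varphi^s(\w))=b(t,\varphi^s(\w))\,g(\varphi^{t+s}(\w))$. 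The net identity is
\[
b(t+s,\w)\,g(\varphi^{t+s}(\w))=b(s,\w)\,b(t,\varphi^s(\w))\,g(\varphi^{t+s}(\w)),
\]
and cancelling the nonzero factor $g(\varphi^{t+s}(\w))$ gives $b(t+s,\w)=b(t,\varphi^s(\w))\,b(s,\w)$, as required.

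The only real subtlety, which I expect to be the main (and essentially only) obstacle, is the bookkeeping when the orbit segment from $\w$ to $\varphi^{t+s}(\w)$ actually crosses $\mathcal V_S$. In that scenario the hypothesis forces every such crossing to be a full time-$1$ passage from $\varphi^1(B_r)$ to $\varphi^2(B_r)$; one then has to split the cocycle decomposition $\Phi_B(t+s,\w)=\Phi_B(t,\varphi^s(\w))\circ\Phi_B(s,\w)$ so that each factor falls into exactly one of the two regimes used to define $b$, using~\eqref{cocycle1tresquartos} outside $\mathcal V_S$ and~\eqref{cocycle1emeio} for the crossing, and relying on~\eqref{inv2} to glue the regimes consistently at the entry and exit faces. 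Once this case analysis is written down, the substitution chain above remains valid and delivers the cocycle identity.
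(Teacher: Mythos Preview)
Your proposal is correct and matches the paper's own approach exactly: the paper's proof is the single sentence ``The proof follows similarly to Claim~\ref{cocycle} taking also into account~\eqref{cocycle1emeio},'' which is precisely the strategy you spell out in detail. Your discussion of the only genuine subtlety---handling the full time-$1$ crossings of $\mathcal V_S$ via~\eqref{cocycle1emeio} and the line invariance~\eqref{inv2}---is the ``taking also into account'' that the paper leaves implicit.
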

The proof follows similarly to Claim~\ref{cocycle} taking also into account~\eqref{cocycle1emeio}.\\

Pick $\w$ in a full measure subset of points that visits infinitely often $B_r$ and for which the conclusion of Birkhoff's Ergodic theorem holds. Without loss of gene\-rality we may assume that $\w\notin \mathcal V_r\cup\mathcal V_S$.
For $t\geq0$ set
$$J_t(\w)=\#\left\{j\in\mathbb N\colon j\leq t, \, \varphi^{j}(\w)\in\varphi^{2}(B_r)\right\}.$$
Recall that
\begin{eqnarray*}
\lambda(B,\omega,  g(\omega))
	&=&\lim\limits_{t\rightarrow\infty}\frac{1}{t}\log\|\Phi_B(t,\omega) g(\omega)\|,
\end{eqnarray*}
and we may split the previous orbit in the limit by considering the time for $\varphi^t(\w)$ to enter $\mathcal{V}_S$, the time-$1$ moment crossing the flowbox $\mathcal V_S$, where  we use \eqref{ST3}, and, again, the time it takes to return to $\mathcal V_S$ and so on. For simplicity, let us define recursively
\begin{itemize}
\item[] $s_0=s_0(\w)=\min\{t\colon\varphi^t(\w)\in\varphi^1(B_r)\}$,
\item[] $\ell_0=\ell_0(\w)=s_0+1$,
\item[]  $s_n=s_0(\varphi^{\ell_{n-1}}(\w))$ and $\ell_n=s_n+1$, for $n\geq1$,
\item[] $\Delta_n=s_n-\ell_{n-1}$, for $n\geq1$,
\item[]  $\tilde\w_n=\varphi^{s_n}(\w)\in\varphi^1(B_r)$ and $\hat\w_n=\varphi^{\ell_n}(\w)\in\varphi^2(B_r)$, for $n\geq 1$.
\end{itemize}
Now, in one hand, since $B_0$ has one-point spectrum, for $\mu$-a.e. $\w$,
\begin{eqnarray}
\lambda(B_0,\w)&=&\lambda(B_0,\omega,  g(\omega))\nonumber\\
	&=&\lim\limits_{t\rightarrow\infty}\frac{1}{t}\log\|\Phi_{B_0}(t,\omega) g(\omega)\| \nonumber\\
	&\overset{\eqref{cocycle0}}{=}&\lim\limits_{t\rightarrow\infty}\frac{1}{t}\log |b_0(t,\omega)|\label{{cocycle00}}.
\end{eqnarray}
On the other hand, by Remark~\ref{estica} and~\eqref{cocycle1emeio} we have for $\tilde \w\in \varphi^1(B_r)$ that
\begin{equation}\label{RR2}
b(1,\tilde\w)g(\varphi^1(\tilde\w))=\Phi_{B}(1,\tilde \w)g(\tilde\w)\overset{\eqref{ST3}}{=}e\cdot\Phi_{B_0}(1,\tilde \w)g(\tilde\w)=e\cdot b_0(1,\tilde\w)g(\varphi^1(\tilde\w)).
\end{equation}

Without loss of generality, we can consider the following limits over the unbounded set $\{t\geq0\colon\varphi^t(\w)\in \varphi^1(B_r)\}$. From Birkhoff's Ergodic theorem we have
\begin{eqnarray*}
\lambda(B,\omega,  g(\omega))&=&\lim\limits_{t\rightarrow\infty}\frac{1}{t}\log\|\Phi_B(t,\omega) g(\omega)\|\\
	&\overset{\eqref{cocycle1eumterco}+\eqref{cocycle1emeio}}{=}&  \lim\limits_{t\rightarrow\infty}\frac{1}{t}\left(\log |b(s_0,\w)|+\sum_{j=0}^{J_t(\w)-1}\log|b(\Delta_{j+1},\hat\w_{s_j})b(1,\tilde\w_{s_j})|\right)
	\\
	&\overset{\eqref{cocycle1tresquartos}+\eqref{RR2}}{=}&  \lim\limits_{t\rightarrow\infty}\frac{1}{t}\left(\log |b_0(s_0,\w)|+\sum_{j=0}^{J_t(\w)-1}\log| b_0(\Delta_{j+1},\hat\w_{s_j})\cdot e\cdot b_0(1,\tilde\w_{s_j})|\right)
	\\
	&\overset{Claim~\ref{cocycle}}{=}&  \lim\limits_{t\rightarrow\infty}\frac{1}{t}\log|b_0(t,\w)|+\lim\limits_{t\rightarrow\infty}\frac{J_t(\w)}{t}\\
&\overset{\eqref{cocycle0}}{=}&
	\lim\limits_{t\rightarrow\infty}\frac{1}{t}\log  \|\Phi_{B_0}(t,\omega) g(\omega)\|+\lim\limits_{t\rightarrow\infty}\frac{1}{t}\int_0^t\mathbbm{1}_{\mathcal{V}_S}(\varphi^t(\w))\,dt\\
	&=&\lambda(B_0,\w,g(\w))+\mu(\mathcal{V}_S),
\end{eqnarray*}
which implies
$\lambda_1(B,\w)>\lambda(B_0,\w)$, hence $\lambda_1(B)>\lambda(B_0)$. From~\eqref{2lbaB0iguallbda1+lbda2}, we get $\lambda_1(B)>\lambda(B_0)>\lambda_2(B)$ so that $B$ has simple spectrum. Moreover, by \eqref{epsilon0}, \eqref{epsilon1} and \eqref{epsilon2} we have $\sigma_p(A,B)<\epsilon$ and Theorem~\ref{main.theorem} is now proved. $\square$

\bigskip

Clearly when considering the set $\mathcal{K}^1_0$ on Corollary~\ref{main.corollary} the equalities \eqref{Cor1} and \eqref{Cor2} become $2\lambda(B_0)=\lambda_1(B)+\lambda_2(B)=0$. Hence the conclusion this time will be that $\lambda_1(B)>0$ for $B\in \mathcal{K}^1_0$ arbitrarily $\sigma_p$-close to $A$ and also $\lambda_2(B)=-\lambda_1(B)<0$.

\vspace{1cm}
	
\textbf{Acknowledgements:} The authors were partially supported by FCT - `Funda\c{c}\~ao para a Ci\^encia e a Tecnologia', through Centro de Matem\'atica e Aplica\c{c}\~oes (CMA-UBI), Universidade da Beira Interior, project UIDB/MAT/00212/2020. MB was partially supported by the Project `Means and Extremes in Dynamical Systems' (PTDC/MAT-PUR/4048/2021).  MB also like to thank CMUP for providing the necessary conditions in which this work was developed.

\vspace{0.4cm}

\end{document}